\documentclass[12pt]{article}
\usepackage{bbm}
\usepackage{amsfonts}
\usepackage{pifont}
\usepackage{hyperref}
\usepackage{mathrsfs}
\usepackage{indentfirst}
\usepackage{amsmath}
\usepackage{amssymb}
\usepackage[amsmath, thmmarks]{ntheorem}
\usepackage{cite}
\usepackage{graphicx}
\usepackage{tikz}
\setlength{\parindent}{2em} \lineskip 0.9ex
\parskip 0ex
\textheight 23cm \textwidth 16cm \topmargin 0in \headheight 0in
\headsep 0in \oddsidemargin=0.8cm
\evensidemargin=0.8cm\oddsidemargin=0.8cm
\newtheorem{definition}{\bf Definition}[section]
\newtheorem{lemma}{\bf Lemma}[section]
\newtheorem{theorem}{\bf Theorem}[section]
\newtheorem{remark}{\bf Remark}[section]

\newtheorem{proposition}{\bf Proposition}[section]
\newtheorem{example}{\bf Example}[section]


\begin{document}
\setcounter{page}{1}

\title{{\textbf{Uninorms via two comparable closure operators on bounded lattices}}\thanks {Supported by National Natural Science
Foundation of China (No.11871097, 12271036)}}
\author{Zhenyu Xiu$^{a}$, Xu Zheng$^{b}$\footnote{Corresponding author.
\emph{E-mail address}: xyz198202@163.com (Z.Y. Xiu), 3026217474@qq.com (X. Zheng). }    \\
\emph{\small $^{a}$College of Applied Mathematics, Chengdu University of Information Technology, }\\
\emph{\small Chengdu 610000, PR China }\\
\emph{\small $^{b}$College of Mathematics and Statistics, Northwest Normal University, }\\
\emph{\small Lanzhou 730070, PR China }
}

\newcommand{\pp}[2]{\frac{\partial #1}{\partial #2}}
\date{}
\maketitle

\begin{quote}
{\bf Abstract}
In this paper, we propose novel methods for constructing uninorms using two comparable closure operators or, alternatively, two comparable interior operators on bounded lattices. These methods are developed under the necessary and sufficient conditions imposed on these operators.  Specifically, the construction of uninorms for  $(x ,y )\in ]0 ,e [\times]e ,1 [ \cup  ]e ,1 [\times]0 ,e [$  depends not only on the structure of the bounded lattices but also on the chosen closure operators (or interior operators). Consequently, the resulting uninorms do not necessarily belong to  $\mathcal{U}_{min}^{*}\cup \mathcal{U}_{min}^{1}$ (or $\mathcal{U}_{max}^{*}\cup\mathcal{U}_{max}^{0}$).
Moreover, we present the degenerate cases of the aforementioned results, which are constructed using only a single closure operator or a single interior operator. Some of these cases correspond to well-known results documented in the literature.



{\textbf{Keywords}:}\ Bounded lattices; Closure operators; Interior operators; Uninorms
\end{quote}

\section{Introduction}\label{intro}

In 1996, Yager and Rybalov \cite{RR96} introduced the uninorms on the unit interval $[0,1]$ as a generalization of triangular norms (t-norms, for short) and triangular conorms (t-conorms, for short) \cite{KM42}, which allow a neutral element $e$ to lie anywhere in $[0,1]$ rather than at $1$ or $0$. Since then, uninorms have been proven to have applications in several fields, such as fuzzy logic, fuzzy set theory, fuzzy system modeling, expert systems, neural networks and so on (see, e.g., \cite{BD99,MG09,MG11,WP07,RR02,GM07}).  In particular, uninorms
are important in bipolar decision making, e.g. already mentioned in expert systems (see e.g., \cite{BD99,PH85,PH92,MT08}).
 Moreover, they  play the role of multiplication in the domain of integration \cite{EP10}.

In 2015, Kara\c{c}al and Mesiar \cite{FK15} generalized the concept of uninorms from the unit interval $[0,1]$
to bounded lattices and introduced some construction methods for uninorms on
bounded lattices.
 Since then, numerous construction methods have been proposed in the literature. The constructions of uninorms are typically based on  the following tools, such as t-norms (or, respectively, t-conorms) (see, e.g., \cite{FK15,EA21,SB14,GD16,GD17,GD19,GD19.1,GD19.2,GD20,GD22,YD19, YD20,FK17,AX20,ZY23}), t-subnorms (or, respectively,   t-superconorms) (see, e.g.,  \cite{ZY23,XJ20,WJ21,HP21}), closure operators (or, respectively,  interior operators) (see, e.g., \cite{GD21,GD23.1,GD23.2,GD24,XJ21,YO20,BZ21}),  additive generators \cite{HeP} and uninorms (see, e.g., \cite{GD23,ZX23,ZX24}).


Closure operators on the powerset $P(X)$  of a nonempty set  $X$     have long been fundamental concepts in topology and are widely used as tools for constructing topologies on $X$ \cite{R.E}.  In fact, closure (interior) operators on   $P(X)$    are fundamentally defined based on the inherent lattice structure of  $P(X)$,  where set inclusion serves as the partial order, and set intersection and set union act as the meet and join, respectively.  In \cite{CJ44},closure (interior) operators on the inherent lattice structure on $P(X)$
 can be readily extended to the lattice  by relaxing the axiom that the closure (interior)  of $\emptyset  (X)$ is $\emptyset  (X)$.




In this paper, we primarily focus on construction methods for uninorms on  arbitrary bounded lattices using two closure operators  (or, respectively, two interior operators).  First, we analyze the existing construction methods using closure operators (or, respectively, interior operators) in the literature and then provide the following observations: (1), (2) and (3).


(1) Based on the additional constraints, the results can be classified as follows:
\begin{itemize}
   \item   No additional constraints.  (see, e.g., \cite{GD23.1,YO20})
\item Sufficient conditions on bounded lattices. (see, e.g., Theorems 3.4 and 3.12 in \cite{GD21},  Propositions 3.6 and 3.7, Theorems  3.3 and 3.4, and  Corollaries 4.3, 4.4, 4.5 and 4.6 in \cite{BZ21})
\item Sufficient and necessary conditions on bounded lattices. (see, e.g., Theorems 3.1, 3.4, 3.10 and 3.12 in \cite{GD21},  Theorems 3.1 and 3.11 in \cite{GD23.2}, Theorems 1 and 2 in \cite{GD24},  Theorems 4.1 and 4.4  in \cite{XJ21}, Propositions  3.5 and 3.6, Theorem 3.9, and  Corollaries 4.2, 4.4 and 4.7 in \cite{BZ21} )
\item  Sufficient conditions on  closure operators (or, respectively,  interior operators). (see, e.g.,  Theorems  3.4 and 3.12 in \cite{GD21}, Propositions 3.1 and 3.5, Theorem 3.9(2), and Corollaries 4.1, 4.2 and 4.7(2) in \cite{BZ21})
\item  Sufficient and necessary conditions on closure operators (or, respectively,  interior operators). (see, e.g., Theorem 3.9(1) and Corollary 4.7(1) in \cite{BZ21})
\item  Sufficient conditions on  t-norms. (see, e.g., Propositions 3.7, and Corollaries 4.5 and 4.6 in \cite{BZ21})
\item  Sufficient conditions on t-conorms. (see, e.g., Theorem 3.4, Proposition 3.7  and Corollary 4.6 in \cite{BZ21})
\end{itemize}

(2) All the aforementioned uninorms belong to $\mathcal{U}_{max}^{*}\cup\mathcal{U}_{max}^{0} $ or $\mathcal{U}_{min}^{*}\cup \mathcal{U}_{min}^{1}$.


(3) The construction of all the aforementioned uninorms on  $]0 ,e [\times]e ,1 [  \cup   ]e ,1 [\times]0 ,e [$  depends  on the given bounded lattices rather than on closure operators  or interior operators.
 This is because the value of uninorms for $(x ,y )\in ]0 ,e [\times]e ,1 [ \cup  ]e ,1 [\times]0 ,e [$
 is always determined as $x $ or $y$.


Based on the above remarks, we can explore the construction of uninorms for   $x \in ]0 ,e [\cup I_{e }$ and $y \in]e ,1 [$ (or $x \in ]e ,1 [\cup I_{e }$ and $y \in]0 ,e [$) using two comparable closure operators (or, respectively,  two comparable interior operators). These construction methods differ from all previously established methods involving closure operators (or interior operators) in the literature.
Moreover,  the construction  of   new uninorms    for
$(x ,y )\in ]0 ,e [\times]e ,1 [ \cup  ]e ,1 [\times]0 ,e [$
depends not only on the given bounded lattices but also on closure operators   (or, respectively,  interior operators).   Consequently,  the resulting uninorms  do not necessarily  belong to  $\mathcal{U}_{min}^{*}\cup \mathcal{U}_{min}^{1}$  (or, respectively, $\mathcal{U}_{max}^{*}\cup\mathcal{U}_{max}^{0}$).  Furthermore, we aim to demonstrate that the additional constraints on the two comparable closure operators (or, respectively,  two comparable interior operators) are both necessary and sufficient.


The remainder of this paper is organized as follows. In Section 2, we recall some concepts and results essential for this manuscript.
In Section 3, we present two new construction methods for uninorms on a bounded lattice $L$ with a t-conorm on  $[e ,1 ]$  and two comparable closure operators on $L$ under the necessary and sufficient conditions on them.  Meanwhile,  the dual results are also presented. Moreover, we give the degeneration of all the above results based on two comparable closure operators (or, respectively, interior  operators).
In Section 4, we conclude with some remarks.


\section{Preliminaries}


In this section, we review some fundamental concepts and results related to lattices and aggregation functions.
\begin{definition}{\rm\cite{GB67}}\label{de2.1}
A lattice $(L,\leq)$  is called bounded if there exist two elements $1,0\in L$  such that $0\leq x\leq 1$ for all $x\in L$.
\end{definition}

Throughout this article, unless otherwise stated, let $L$ denote a bounded lattice with top and bottom elements $1$ and $0$, respectively.
\begin{definition}{\rm\cite{GB67}}\label{de2.2}
Let $L$ be a bounded lattice, $a ,b \in L$ with $a \leq b $. A subinterval $[a ,b ]$ of $L$ is defined as
$[a , b ]=\{x \in L: a \leq x  \leq b \}.$
Similarly, we can define $[a , b [=\{x \in L: a \leq x  < b \}, ]a , b ]=\{x \in L: a < x  \leq b \}$ and $]a , b [=\{x \in L: a< x  < b\}$.
\end{definition}

In the following, we use the notation $a\parallel b$ to represent that $a$  and $b$  are incomparable. For each $a\in L$, let $I_{a }$ denote the set of all  elements in $L$ that are incomparable with $a$, that is, $I_{a }=\{x \in L\mid x \parallel a\}$.

\begin{definition}{\rm\cite{SS06}}\label{de2.3}
An operation $T :L^{2}\rightarrow L$  is called a t-norm on $L$ if it is commutative, associative, and increasing with respect to both variables, and it has the neutral element $1 \in L$, that is, $T (1 ,x )=x $  for all $x \in L$.
\end{definition}

\begin{definition}{\rm\cite{GD16}}\label{}
An operation $S :L^{2}\rightarrow L$  is called a t-conorm on  $L$ if it is commutative, associative, and increasing with respect to both variables, and it has the neutral element $0 \in L$, that is, $S (0 ,x )=x $  for all $x \in L$.
\end{definition}

\begin{definition}{\rm\cite{CJ44}}\label{de29}
A mapping $cl : L\rightarrow L$ is said to be a closure operator on $L$ if it satisfies the following three conditions:\\
{\rm(CL1)} $x \leq cl (x )$ for any $x\in L$;\\
{\rm(CL2)}  $cl (x \vee y )= cl (x )\vee cl (y )$ for any $x,y\in L$;\\
{\rm(CL3)}  $cl (cl (x ))= cl (x )$ for any $x\in L$.
\end{definition}

\begin{proposition}
	Let $cl$ be a closure operator on $L$. Then it satisifies\\
{\rm(CL4)}  $x\leq y$ implies $cl(x)\leq cl(y)$ for any $x,y\in L$.
\end{proposition}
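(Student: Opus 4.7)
The plan is to deduce monotonicity (CL4) directly from the join-preservation axiom (CL2), without needing (CL1) or (CL3) at all. The key observation is that in a lattice, $x \leq y$ is equivalent to $x \vee y = y$, so (CL2) can be specialized to comparable pairs.

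More concretely, I would first assume $x, y \in L$ with $x \leq y$ and rewrite $y$ as $x \vee y$. Applying $cl$ to both sides and invoking (CL2) yields
\begin{equation*}
cl(y) \;=\; cl(x \vee y) \;=\; cl(x) \vee cl(y).
\end{equation*}
The right-hand side is an upper bound of $cl(x)$ by the definition of join, so $cl(x) \leq cl(x) \vee cl(y) = cl(y)$, which is exactly (CL4).

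There is essentially no obstacle here: the statement is the classical lattice-theoretic fact that any join-preserving map is order-preserving, and it is proved in two lines. The only thing worth double-checking is that (CL2) is genuinely a strict equality (not merely an inequality) so that substituting $x \vee y = y$ gives the full conclusion, but this is exactly how axiom (CL2) was stated in Definition~\ref{de29}. Neither (CL1) (extensivity) nor (CL3) (idempotency) is needed for this implication, which reflects the fact that monotonicity is a consequence purely of the algebraic behaviour of $cl$ on joins.
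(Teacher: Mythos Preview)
Your proof is correct and is precisely the standard unpacking of the paper's one-line justification ``It follows immediately from (CL2).'' There is nothing to add; your argument is the intended one.
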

\begin{proof}
	It follows immediately from (CL2).
\end{proof}

\begin{definition}{\rm\cite{YO20}}\label{de30}
A mapping $int : L\rightarrow L$ is said to be an interior operator on $L$ if it satisfies the following three conditions:\\
{\rm(IN1)}  $int (x )\leq x $ for any $x\in L$;\\
{\rm(IN2)}  $int (x \wedge y )= int (x )\wedge int (y )$ for any $x ,y \in L$;\\
{\rm(IN3)}  $int (int (x ))= int (x )$ for any $x\in L$.
\end{definition}

\begin{proposition}
	Let $int$ be an interior operator on $L$. Then it satisifies\\
	{\rm(IN4)}  $x\leq y$ implies $int(x)\leq int(y)$ for any $x,y\in L$.
\end{proposition}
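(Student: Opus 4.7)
The plan is to mirror the one-line argument that the previous proposition used for closure operators, exploiting the duality between (CL2) and (IN2). The key observation is that the order relation $x \leq y$ in a lattice is equivalent to $x \wedge y = x$, so that axiom (IN2) can be fed directly to deduce monotonicity.

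More precisely, I would fix $x, y \in L$ with $x \leq y$. The first step is to rewrite this inequality in meet form, namely $x = x \wedge y$. Applying $int$ to both sides and invoking (IN2) gives the chain
\begin{equation*}
int(x) \;=\; int(x \wedge y) \;=\; int(x) \wedge int(y),
\end{equation*}
and the final equality $int(x) = int(x) \wedge int(y)$ is equivalent to $int(x) \leq int(y)$, which is exactly (IN4).

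There is no real obstacle here; the proof is a one-liner and the only thing worth stating explicitly is that the equivalence between $a \leq b$ and $a = a \wedge b$ is being used twice (once to rewrite $x \leq y$, and once to extract the conclusion). The argument is strictly dual to the proof of the preceding proposition, where $x \leq y$ was rewritten as $x \vee y = y$ and (CL2) was applied. Accordingly, I would simply write: \emph{It follows immediately from} (IN2).
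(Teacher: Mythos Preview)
Your proof is correct and is precisely the expanded version of the paper's one-line proof, which just states ``It follows immediately from (IN2).'' You have supplied exactly the standard lattice-theoretic justification behind that remark.
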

\begin{proof}
	It follows immediately from (IN2).
\end{proof}

\begin{definition}{\rm\cite{FK15}}\label{de2.4}
An operation $U :L^{2}\rightarrow L$  is called a uninorm on  $L$ if it is commutative, associative, and increasing with respect to both variables, and it has the neutral element $e \in L$, that is, $U (e ,x )=x $  for all $x \in L$.
\end{definition}


For convenience, we present the following notations in a bounded lattice $(L,\leq,0 ,1 )$ with  $e \in L\setminus\{0 ,1 \}$, which can be found in \cite{HP21}.
\begin{itemize}
	\item $\mathcal{U}_{min}$: The class of all uninorms $U $ on $L$ with neutral element $e $ satisfying the following condition: $U (x ,y )=y $, for all $(x ,y )\in]e ,1 ]\times L\setminus[e ,1 ]$.
	\item $\mathcal{U}_{max}$: The class of all uninorms $U $ on $L$ with neutral element $e $ satisfying the following condition:
	$U (x ,y )=y $, for all $(x ,y )\in[0 ,e [\times L\setminus[0 ,e ]$.
	\item $\mathcal{U}_{min}^{*}$: The class of all uninorms $U $ on $L$ with neutral element $e $ satisfying the following condition: $U (x ,y )=y $ for all $(x ,y )\in]e ,1 ]\times [0 ,e [$.
	\item 	$\mathcal{U}_{max}^{*}$: The class of all uninorms $U $ on $L$ with neutral element $e $ satisfying the following condition: $U (x ,y )=y $ for all $(x ,y )\in[0 ,e [\times ]e ,1 ]$.
	\item 	$\mathcal{U}_{min}^{r}$: The class of all uninorms $U $ on $L$ with neutral element $e $ satisfying the following condition: $U (x ,y )=x $ for all $(x ,y )\in]e ,1 ]\times L\setminus[e ,1 ]$.
	\item 	$\mathcal{U}_{max}^{r}$: The class of all uninorms $U $ on $L$ with neutral element $e $ satisfying the following condition: $U (x ,y )=x $ for all $(x ,y )\in[0 ,e [\times L\setminus[0 ,e ]$.
	\item $\mathcal{U}_{min}^{1}$: The class of all uninorms $U $ on $L$ with neutral element $e $ satisfying the following two conditions: $U (x ,y )=y $, for all $(x ,y )\in]e ,1 [\times L\setminus[e ,1 ]$ and $U (1 ,y )=1 $, for all $y \in L\setminus[e ,1 ]$.
	\item 	$\mathcal{U}_{max}^{0}$: The class of all uninorms $U $ on $L$ with neutral element $e $ satisfying the following two conditions: $U (x ,y )=y $, for all $(x ,y )\in]0 ,e [\times L\setminus[0 ,e ]$ and $U (0 ,y )=0 $, for all $y \in L\setminus[0 ,e ]$.
\end{itemize}

\begin{remark}{\rm\cite{HP21}}
 $\mathcal{U}_{max}\cup \mathcal{U}_{min}^{r}\subseteq \mathcal{U}_{max}^{*}$ and $\mathcal{U}_{min}\cup \mathcal{U}_{max}^{r}\subseteq \mathcal{U}_{min}^{*}$.
\end{remark}

\begin{proposition}{\rm\cite{WJ21}}\label{pro2.1}
Let $H$ be a commutative binary operation on $S$ and $A_{1},A_{2},\ldots,A_{n}$ be subsets of $S$. Then $H$ is associative on $A_{1}\cup A_{2}\cup\ldots\cup A_{n}$ if and only if the following statements hold:\\
$(i)$ for every combination $\{i,j,k\}$ of size $3$ chosen from $\{1,2,\ldots,n\}$, $H(x_{i},H(y_{j},z_{k}))=H(H(x_{i},y_{j}),z_{k})=H(y_{j},H(x_{i},z_{k}))$ for all $x_{i}\in A_{i},y_{j}\in A_{j},z_{k}\in A_{k}$;\\
$(ii)$ for every combination $\{i,j\}$ of size $2$ chosen from $\{1,2,\ldots,n\}$, $H(x_{i},H(y_{i},z_{j}))=H(H(x_{i},y_{i}),z_{j}) $ for all $x_{i}\in A_{i},y_{i}\in A_{i},z_{j}\in A_{j}$;\\
$(iii)$ for every combination $\{i,j\}$ of size $2$ chosen from $\{1,2,\ldots,n\}$, $H(x_{i},H(y_{j},z_{j}))=H(H(x_{i},y_{j}),z_{j}) $ for all $x_{i}\in A_{i},y_{j}\in A_{j},z_{j}\in A_{j}$;\\
$(iv)$ for every $i\in \{1,2,\ldots,n\}$, $H(x_{i},H(y_{i},z_{i}))=H(H(x_{i},y_{i}),z_{i}) $ for all $x_{i},y_{i},z_{i}$ $\in A_{i}$.
\end{proposition}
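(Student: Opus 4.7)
The forward direction is essentially immediate: if $H$ is associative on $A_{1}\cup\cdots\cup A_{n}$, then the identities in (i)--(iv) are just specific instances of the associativity equation, so they all hold.

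For the backward direction, my plan is to fix arbitrary $a,b,c\in A_{1}\cup\cdots\cup A_{n}$ and verify $H(a,H(b,c))=H(H(a,b),c)$ by a case analysis on the indices $i,j,k\in\{1,\dots,n\}$ with $a\in A_{i}$, $b\in A_{j}$, $c\in A_{k}$. The situations to distinguish are: (a) $i,j,k$ pairwise distinct; (b) $i=j\neq k$; (c) $j=k\neq i$; (d) $i=k\neq j$; (e) $i=j=k$. Case (a) is exactly hypothesis (i), case (b) is (ii), case (c) is (iii), and case (e) is (iv); each of these is a direct application of the corresponding assumption.

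The main obstacle is case (d), where the two elements sitting in a common set are \emph{non-adjacent} in the triple, so none of (ii)--(iv) applies verbatim. The idea is to use commutativity to rearrange the triple until the two ``like-indexed'' elements become adjacent, then invoke (ii) or (iii), and finally undo the rearrangement. Concretely: by commutativity of the inner pair, $H(a,H(b,c))=H(a,H(c,b))$; since $a,c\in A_{i}$ and $b\in A_{j}$, hypothesis (ii) yields $H(a,H(c,b))=H(H(a,c),b)$. Commuting the outer pair gives $H(H(a,c),b)=H(b,H(a,c))$, and now (iii) applies (with $b\in A_{j}$ and $a,c\in A_{i}$) to produce $H(b,H(a,c))=H(H(b,a),c)$. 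A final use of commutativity turns $H(b,a)$ into $H(a,b)$, closing the chain.

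No further inputs are needed: once the indices of the three elements are known, exactly one of (i)--(iv) (possibly combined with commutativity in case (d)) handles the associativity identity. The argument is purely combinatorial in the indices, so it costs nothing beyond the case split and the commutativity manipulations outlined above.
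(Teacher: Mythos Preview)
The paper does not supply its own proof of this proposition; it is quoted verbatim from \cite{WJ21} and used as a black box. Your argument is correct: the forward direction is trivial, and for the converse you cover all index patterns, with case (d) ($i=k\neq j$) handled by the commutativity rearrangement you describe, which is exactly the reason the hypothesis that $H$ be commutative is needed.
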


%

\section{New methods to construct uninorms on bounded lattices}
\label{}

In this section, based on two comparable  closure operators,  we propose some new construction methods for uninorms on arbitrary bounded lattices
 with a given t-conorm $S$ on the subinterval $[e ,1 ]$  of $L$.
 These new uninorms on bounded lattices need  not belong to  $\mathcal{U}_{min}^{*}\cup \mathcal{U}_{min}^{1}$.   Moreover,  the dual results are also presented.
 Meanwhile, illustrative examples and figures for the construction of uninorms on bounded lattices are provided.

First, let us explore closure operators and interior operators on bounded lattices.

\begin{lemma}\label{le2}
Let $cl $ be a closure operator on $L$. Then $cl (cl (x )\wedge y )=cl (x )$ for any $x \leq y $.
\end{lemma}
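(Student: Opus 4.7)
The plan is to establish the equality by proving both inequalities $cl(cl(x) \wedge y) \leq cl(x)$ and $cl(x) \leq cl(cl(x) \wedge y)$, each using only one or two of the closure axioms, so nothing really delicate will be required.

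For the upper bound, I would start from the trivial observation $cl(x) \wedge y \leq cl(x)$, which is just the defining property of the meet. Applying monotonicity (CL4) gives $cl(cl(x) \wedge y) \leq cl(cl(x))$, and then (CL3) collapses the right-hand side to $cl(x)$. So this direction is essentially a one-line consequence of the idempotence and monotonicity of $cl$, and does not yet use the hypothesis $x \leq y$.

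The hypothesis $x \leq y$ enters in the lower bound. By (CL1) we have $x \leq cl(x)$, and by assumption $x \leq y$, so $x$ is a lower bound of $\{cl(x), y\}$, which forces $x \leq cl(x) \wedge y$. Applying (CL4) once more yields $cl(x) \leq cl(cl(x) \wedge y)$, which is exactly the other inequality needed. Combining both gives the desired equality.

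The only conceptual step worth flagging is that the proof does not need (CL2) in its full strength, only the weaker monotonicity consequence (CL4), together with (CL1) and (CL3); this is convenient because (CL4) has just been recorded in the preceding proposition and can be invoked without further comment. There is no real obstacle here: the statement is a short two-sided sandwich, and the hypothesis $x \leq y$ is exactly what is required to make the two uses of monotonicity match.
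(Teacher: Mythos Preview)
Your proof is correct and matches the paper's own argument essentially line for line: both prove the two inequalities separately using (CL1), (CL3), and (CL4), with $x \leq cl(x)\wedge y$ (from (CL1) and $x\leq y$) giving the lower bound and $cl(x)\wedge y \leq cl(x)$ plus idempotence giving the upper bound. The only difference is the order in which the two directions are presented.
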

\begin{proof}
Take any $x,y\in L$ such that $x\leq y$. On one hand, it follows from (CL1) that $x \leq cl (x )\wedge y $. Then by (CL4), we have $cl (x )\leq cl (cl (x )\wedge y )$. On the other hand, since $cl (x )\wedge y \leq cl (x )$, it follows from (CL3) and (CL4) that $cl (cl (x )\wedge y )\leq cl (cl (x ))=cl (x )$. Thus, $cl (cl (x )\wedge y )=cl (x )$ for any $x\leq y$.
\end{proof}

\begin{lemma}\label{}
Let $int $ be an interior operator on $L$. Then $int (int (x )\vee y )=int (x )$ for any $y \leq x $.
\end{lemma}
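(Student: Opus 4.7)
The plan is to mirror the proof of Lemma~3.1 by dualizing every step: swap the roles of $\wedge/\vee$, swap $\leq/\geq$, and replace (CL1), (CL3), (CL4) with their interior counterparts (IN1), (IN3), (IN4). Fix $x,y \in L$ with $y \leq x$; the goal is to establish the two inequalities $\operatorname{int}(x) \leq \operatorname{int}(\operatorname{int}(x) \vee y)$ and $\operatorname{int}(\operatorname{int}(x) \vee y) \leq \operatorname{int}(x)$.

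For the first inequality, I would note that $\operatorname{int}(x) \leq \operatorname{int}(x) \vee y$ trivially, so by the monotonicity property (IN4) we obtain $\operatorname{int}(\operatorname{int}(x)) \leq \operatorname{int}(\operatorname{int}(x) \vee y)$, and then (IN3) collapses $\operatorname{int}(\operatorname{int}(x))$ to $\operatorname{int}(x)$. For the second inequality, I would use (IN1) to get $\operatorname{int}(x) \leq x$, combine this with the hypothesis $y \leq x$ to conclude $\operatorname{int}(x) \vee y \leq x$, and then apply (IN4) to obtain $\operatorname{int}(\operatorname{int}(x) \vee y) \leq \operatorname{int}(x)$.

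There is essentially no obstacle here; the lemma is a direct order-theoretic dual of Lemma~3.1, and the only thing to be careful about is not confusing the direction of the hypothesis ($y \leq x$ rather than $x \leq y$), since this is what permits the bound $\operatorname{int}(x) \vee y \leq x$ needed to close the argument. Combining the two inequalities yields $\operatorname{int}(\operatorname{int}(x) \vee y) = \operatorname{int}(x)$, completing the proof.
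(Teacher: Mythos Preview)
Your proof is correct and is precisely the dualization the paper intends: the paper itself gives no explicit argument for this lemma, stating only that it follows by the proof similar to Lemma~3.1, and your two inequalities are exactly the order-theoretic duals of the two directions proved there.
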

\begin{proof}
It can be proved immediately by the proof similar to Lemma \ref{le2}.
\end{proof}\


Now, we present a new construction method for uninorms on a bounded lattice using a t-conorm on $[e, 1]$ and two closure operators on $L$.

\begin{theorem}\label{th32}
Let $e \in L\setminus\{0 ,1 \}$,
$S $ be a t-conorm on $[e ,1 ]$,  and $cl_{1}$ and $cl_{2}$ be closure operators on $L$ with $cl_{1}(x )\leq cl_{2}(x )$ for all $x \in L\setminus[e ,1 ]$. Define $U :L^{2}\rightarrow L$  by
\begin{eqnarray*}
U (x ,y )=\left\{
\begin{array}{ll}
S (x , y ) & $if$\ (x ,y )\in [e ,1 ]^{2},\\
x  & $if$\ (x ,y )\in (I_{e }\cup ]0 ,e [)\times \{e \},\\
y  & $if$\ (x ,y )\in \{e \}\times (I_{e }\cup ]0 ,e [),\\
cl_{1}(x )\wedge (x \vee e ) & $if$\ (x ,y )\in  ]0 ,e [  \times ]e ,1 ],\\
cl_{1}(y )\wedge (y \vee e ) & $if$\ (x ,y )\in ]e ,1 ]   \times ]0 ,e [,\\
cl_{2}(x )\wedge (x \vee e ) & $if$\ (x ,y )\in I_{e }\times ]e ,1 ],\\
cl_{2}(y )\wedge (y \vee e ) & $if$\ (x ,y )\in ]e ,1 ] \times  I_{e },\\
0  &  otherwise.
\end{array} \right.
\end{eqnarray*}
Then $U $ is a uninorm on $L$ with the neutral element $e$ if and only if $cl_{1}(x )\notin [e ,1 ]$ for all $x \in ]0 ,e [$ and $cl_{2}(x )\notin [e ,1 ]$ for all $x \in I_{e }$.
\end{theorem}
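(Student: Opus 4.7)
The plan is to prove the theorem in both directions. For necessity I argue by contraposition, producing a single associativity failure whenever either hypothesis fails. For sufficiency I verify the four uninorm axioms, with associativity being the substantive step.

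For the necessity direction, suppose first that $cl_{1}(x_{0}) \in [e,1]$ for some $x_{0} \in ]0,e[$. Then $x_{0} \vee e = e$ and $cl_{1}(x_{0}) \geq e$, so the definition gives $U(x_{0},1) = cl_{1}(x_{0}) \wedge e = e$. Hence $U(x_{0}, U(x_{0},1)) = U(x_{0}, e) = x_{0}$ by the second branch, while $U(U(x_{0},x_{0}), 1) = U(0, 1) = 0$ through the ``otherwise'' branch, so associativity forces $x_{0} = 0$, contradicting $x_{0} \in ]0,e[$. The $cl_{2}$ case is parallel: if $cl_{2}(x_{0}) \in [e,1]$ for some $x_{0} \in I_{e}$, then (CL1) combined with $cl_{2}(x_{0}) \geq e$ gives $cl_{2}(x_{0}) \geq x_{0} \vee e$, so $U(x_{0}, 1) = x_{0} \vee e \in ]e, 1]$. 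Then $U(x_{0}, U(x_{0},1)) = U(x_{0}, x_{0} \vee e) = cl_{2}(x_{0}) \wedge (x_{0} \vee e) = x_{0} \vee e$, but $U(U(x_{0},x_{0}),1) = U(0,1) = 0$; since $x_{0} \vee e \geq e > 0$, this is a contradiction.

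For sufficiency, commutativity is evident from the symmetry of the branches, and $U(e, x) = x$ is read off block by block (with $U(e,0)=0$ coming from the ``otherwise'' branch and $U(e,x)=S(e,x)=x$ for $x \in [e,1]$). Monotonicity I verify by partitioning $L$ as $\{0\} \cup ]0,e[ \cup \{e\} \cup ]e,1] \cup I_{e}$, fixing one argument, and comparing branches across each pair of blocks related by $\leq$. Within-block inequalities use (CL4) directly, and the main cross-block transition $x \leq x'$ with $x \in ]0,e[$ and $x' \in I_{e}$ reduces, using the hypothesis $cl_{1} \leq cl_{2}$ on $L \setminus [e,1]$ together with (CL4), to the chain $cl_{1}(x) \wedge e \leq cl_{2}(x) \wedge e \leq cl_{2}(x') \wedge (x' \vee e)$; the remaining transitions are analogous. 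For associativity I apply Proposition \ref{pro2.1} to the same five-block partition: triples meeting $\{0\}$ or $\{e\}$ are immediate by absorbency and neutrality, and triples wholly in $]e,1]^{3}$ reduce to associativity of $S$.

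The main obstacle will be the remaining associativity triples that mix $]0,e[$, $]e,1]$, and $I_{e}$. The crucial observation is that the two hypotheses force the intermediate values to land in the correct block: for $x \in ]0,e[$ and $y \in ]e,1]$ the value $U(x,y) = cl_{1}(x) \wedge e$ must stay in $]0,e[$, because it cannot equal $e$ without producing $cl_{1}(x) \in [e,1]$; analogously, for $x \in I_{e}$ and $y \in ]e,1]$ the value $U(x,y) = cl_{2}(x) \wedge (x \vee e)$ stays in $I_{e}$, since $cl_{2}(x) \notin [e,1]$ forbids it from reaching $]e,1]$ while (CL1) forbids it from dropping into $[0,e]$. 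With this block-preservation in hand, Lemma \ref{le2} yields $cl_{1}(cl_{1}(x) \wedge e) = cl_{1}(x)$ (and its analogue for $cl_{2}$ with $x \vee e$), so that on each mixed triple both $U(U(\cdot,\cdot),\cdot)$ and $U(\cdot, U(\cdot,\cdot))$ collapse to the same closure-based expression or jointly to $0$ through the ``otherwise'' branch. The remaining work is bookkeeping: tracking, in each of the finitely many triples produced by Proposition \ref{pro2.1}, which of the five blocks every intermediate value falls into.
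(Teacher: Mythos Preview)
Your proposal is correct and follows essentially the same route as the paper: the necessity direction uses the very same associativity failure on the triple $(x_0,x_0,1)$, and the sufficiency direction rests on the same two block-preservation facts ($cl_{1}(x)\wedge e\in]0,e[$ for $x\in]0,e[$ and $cl_{2}(x)\wedge(x\vee e)\in I_{e}$ for $x\in I_{e}$), Lemma~\ref{le2}, and the case analysis of Proposition~\ref{pro2.1} over the partition $\{0\}\cup]0,e[\cup\{e\}\cup]e,1]\cup I_{e}$. The only cosmetic differences are that the paper handles the $cl_{2}$-necessity by first disposing of $cl_{2}(x_0)=e$ via (CL1) and that it spells out the monotonicity and associativity cases explicitly rather than summarising them as bookkeeping.
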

\begin{proof}
Necessity: Suppose that $U$ is a uninorm on $L$ with the neutral element $e $.

First, we prove that $cl_{1}(x )\notin [e ,1 ]$ for all $x \in ]0 ,e [$. Assume that there exists $x \in ]0 ,e [$ such that $cl_{1}(x )\in [e ,1 ]$. Then $U (U (1 ,x ),x )=U (cl_{1}(x )\wedge (x \vee e ),x )=U (e ,x )=x $ and $U (1 ,U (x ,x ))=U (1 ,0 )=0 $. Since $x \neq 0 $, this contradicts the associativity property of $U $. Thus $cl_{1}(x )\notin [e ,1 ]$ for all $x \in ]0 ,e [$.

Next, we prove that $cl_{2}(x )\notin [e ,1 ]$ for all $x \in I_{e }$.  By Definition \ref{de29},  we know that $cl_{2}(x )\neq e $ for all $x \in I_{e }$. In the following, we just prove that $cl_{2}(x )\notin ]e ,1 ]$ for all $x \in I_{e }$. Assume that there exists $x \in I_{e }$ such that $cl_{2}(x )\in ]e ,1 ]$. Then $cl_{2}(x )\wedge (x \vee e )\in ]e ,1 ]$. Therefore $U (U (1 ,x ),x )=U (cl_{2}(x )\wedge (x \vee e ),x )=cl_{2}(x )\wedge (x \vee e )$ and $U (1 ,U (x ,x ))=U (1 ,0 )=0 $. Since $cl_{2}(x )\wedge (x \vee e )\neq 0 $, this contradicts the associativity property of $U $. Thus $cl_{2}(x )\notin [e ,1 ]$ for all $x \in I_{e }$.

Sufficiency: It is easy to see that $U $ is commutative and  $e$  is a neutral element. So it remains to show the increasingness and associativity of $U$.

Increasingness:  Take any $x,y,z\in L$ such that $x \leq y $. It follows immediately that $U (x ,z )\leq U (y ,z )$ if $0 \in \{x ,y ,z \}$ or both $x $ and $y $ belong to one of the  intervals $]0 ,e [, \{e \}, I_{e }$ or $]e ,1 ]$. The residual proof can be split into all possible cases:

1. $x \in ]0 ,e [$

\ \ \ 1.1. $y \in \{e \}$

\ \ \ \ \ \ 1.1.1. $z \in ]0 ,e [\cup I_{e }$

\ \ \ \ \ \ \ \ \ \ \ \ $U (x ,z )=0 < z =U (y ,z )$

\ \ \ \ \ \ 1.1.2. $z \in \{e \}$

\ \ \ \ \ \ \ \ \ \ \ \ $U (x ,z )=x < y =S (y ,z )=U (y ,z )$

\ \ \ \ \ \ 1.1.3. $z \in ]e ,1 ]$

\ \ \ \ \ \ \ \ \ \ \ \ $U (x ,z )=cl_{1}(x )\wedge (x \vee e )=cl_{1}(x )\wedge e <e <z =S (y ,z )=U (y ,z )$

\ \ \ 1.2. $y \in I_{e }$

\ \ \ \ \ \ 1.2.1. $z \in ]0 ,e [\cup I_{e }$

\ \ \ \ \ \ \ \ \ \ \ \ $U (x ,z )=0 =U (y ,z )$

\ \ \ \ \ \ 1.2.2. $z \in \{e \}$

\ \ \ \ \ \ \ \ \ \ \ \ $U (x ,z )=x < y =U (y ,z )$

\ \ \ \ \ \ 1.2.3. $z \in ]e ,1 ]$

\ \ \ \ \ \ \ \ \ \ \ \ $U (x ,z )=cl_{1}(x )\wedge (x \vee e )\leq cl_{2}(y )\wedge (y \vee e )=U (y ,z )$

\ \ \ 1.3. $y \in ]e ,1 ]$

\ \ \ \ \ \ 1.3.1. $z \in ]0 ,e [$

\ \ \ \ \ \ \ \ \ \ \ \ $U (x ,z )=0 < cl_{1}(z )\wedge (z \vee e )=U (y ,z )$

\ \ \ \ \ \ 1.3.2. $z \in \{e \}$

\ \ \ \ \ \ \ \ \ \ \ \ $U (x ,z )=x  < y =S (y ,z )=U (y ,z )$

\ \ \ \ \ \ 1.3.3. $z \in I_{e }$

\ \ \ \ \ \ \ \ \ \ \ \ $U (x ,z )=0 <cl_{2}(z )\wedge (z \vee e )=U (y ,z )$

\ \ \ \ \ \ 1.3.4. $z \in ]e ,1 ]$

\ \ \ \ \ \ \ \ \ \ \ \ $U (x ,z )=cl_{1}(x )\wedge (x \vee e )=cl_{1}(x )\wedge e <e < y \leq S (y ,z )=U (y ,z )$

2. $x \in \{e \}, y \in ]e ,1 ]$

\ \ \ 2.1. $z \in ]0 ,e [$

\ \ \ \ \ \ \ \ \ \ \ \ $U (x ,z )=z \leq cl_{1}(z )\wedge (z \vee e )=U (y ,z )$

\ \ \ 2.2. $z \in [e ,1 ]$

\ \ \ \ \ \ \ \ \ \ \ \ $U (x ,z )=S (x ,z )\leq S (y ,z )=U (y ,z )$

\ \ \ 2.3. $z \in I_{e }$

\ \ \ \ \ \ \ \ \ \ \ \ $U (x ,z )=z \leq  cl_{2}(z )\wedge (z \vee e )=U (y ,z )$

3. $x \in I_{e }, y \in ]e ,1 ]$

\ \ \ 3.1. $z \in ]0 ,e [$

\ \ \ \ \ \ \ \ \ \ \ \ $U (x ,z )=0 < cl_{1}(z )\wedge (z \vee e )=U (y ,z )$

\ \ \ 3.2. $z \in \{e \}$

\ \ \ \ \ \ \ \ \ \ \ \ $U (x ,z )=x < y =S (y ,z )=U (y ,z )$

\ \ \ 3.3. $z \in I_{e } $

\ \ \ \ \ \ \ \ \ \ \ \ $U (x ,z )=0 <cl_{2}(z )\wedge (z \vee e )= U (y ,z )$

\ \ \ 3.4. $z \in  ]e ,1 ]$

\ \ \ \ \ \ \ \ \ \ \ \ $U (x ,z )=cl_{2}(x )\wedge (x \vee e )< x \vee e \leq y \leq  S (y ,z )= U (y ,z )$

Based on the above cases, we always obtain that $U (x ,z )\leq U (y ,z )$ for all $x ,y ,z \in L$ with $x \leq y $. Therefore, $U $ is increasing.

Associativity: First,  we show the following two facts.

(1) If $cl_{1}(x )\notin [e ,1 ]$ for all $x \in ]0 ,e [$, then $cl_{1}(x )\wedge (x \vee e )\in ]0 ,e [$ for all $x \in ]0 ,e [$.

Take any $x \in ]0 ,e [$. On one hand, since $x \leq cl_{1}(x )$ and $x < x \vee e $,
$cl_{1}(x )\wedge (x \vee e ) \geq  x  > 0 $.
On the other hand, since $cl_{1}(x )\notin [e ,1 ]$ and $x \vee e =e $, $cl_{1}(x )\wedge (x \vee e )<e $. Thus $cl_{1}(x )\wedge (x \vee e )\in ]0 ,e [$.

(2) If $cl_{2}(x )\notin [e ,1 ]$ for all $x \in I_{e }$, then $cl_{2}(x )\wedge (x \vee e )\in I_{e }$ for all $x \in I_{e }$.

Take any $x \in I_{e }$. Since $cl_{2}(x )\notin [e ,1 ]$, $cl_{2}(x )\wedge (x \vee e )\notin [e ,1 ]$. Next, we show that $cl_{2}(x )\wedge (x \vee e )\notin [0 ,e [$. If $cl_{2}(x )\wedge (x \vee e )\in [0 ,e [$, then $cl_{2}(x )\wedge (x \vee e )<x $ or $cl_{2}(x )\wedge (x \vee e )\parallel x $. This contradicts $x \leq cl_{2}(x )$ and $x < x \vee e $. Thus $cl_{2}(x )\wedge (x \vee e )\notin [0 ,e [$.  Therefore $cl_{2}(x )\wedge (x \vee e )\in I_{e }$.


Next, we prove  that $U (x ,U (y ,z ))=U (U (x ,y ),z )$  for all $x ,y ,$ $z \in L$. It is obvious that $U (x , U (y ,z ))=U (U (x ,y ),z )=U (y , U (x ,z ))$ if $0 \in \{x ,y ,z \}$ or  $e \in \{x ,y ,z \}$.
By Proposition \ref{pro2.1}, we need to verify the following cases:

1. If $x ,y ,z \in ]0 ,e [$, then $U (x , U (y ,z ))=U (x , 0 )=0 =U (0 ,z )=U (U (x ,y ),z )$.

2. If $x ,y ,z \in I_{e }$, then $U (x , U (y ,z ))=U (x , 0 )=0 =U (0 ,z )=U (U (x ,y ),z )$.

3. If $x ,y ,z \in ]e ,1 ]$, then $U (x , U (y ,z ))=U (x , S (y ,z ))=S (x , S (y ,z ))=S (S (x ,y ),z )$ $=U (S (x ,y ),z )=U (U (x ,$ $y ),z )$.

4. If $x ,y \in ]0 ,e [$, $z \in I_{e }$, then $U (x , U (y ,z ))=U (x ,0 )=0 =U (0 ,z )=U (U (x ,y ),z )$.

5. If $x ,y \in ]0 ,e [$ and $z \in ]e ,1 ]$, then $U (x , U (y ,z ))=U (x ,$ $cl_{1}(y )\wedge (y \vee e ))=0 =U (0 ,z )=U (U (x ,y ),z )$.

6. If $x ,y \in I_{e }$ and $z \in ]e ,1 ]$, then $U (x , U (y ,z ))=U (x ,$ $cl_{2}(y )\wedge (y \vee e ))=0 =U (0 ,z )=U (U (x ,y ),z )$.

7. If $x \in ]0 ,e [$ and $y ,z \in I_{e }$, then $U (x , U (y ,z ))=U (x ,0 )$ $=0 =U (0 ,z )=U (U (x ,y ),z )$.

8. If $x \in ]0 ,e [$ and $y ,z \in ]e ,1 ]$, then $U (x , U (y ,z ))=U (x ,$ $S (y ,z ))=cl_{1}(x )\wedge(x \vee e )$ and $U (U (x ,y ),z )=U (cl_{1}(x )\wedge(x \vee e ),z )=cl_{1}(cl_{1}(x )\wedge(x \vee e ))\wedge ((cl_{1}(x )\wedge(x \vee e ))\vee e )$.
By Lemma \ref{le2}, we can obtain that $cl_{1}(x )=cl_{1}(cl_{1}(x )\wedge(x \vee e ))$ and $x \vee e =(cl_{1}(x )\wedge(x \vee e ))\vee e $. Thus $U (x , U (y ,z ))=U (U (x ,y ),z )$.

9. If $x \in I_{e }$ and $y ,z \in ]e ,1 ]$, then $U (x , U (y ,z ))=U (x ,$ $S (y ,z ))=cl_{2}(x )\wedge(x \vee e )$ and $U (U (x ,y ),z )=U (cl_{2}(x )\wedge(x \vee e ),z )=cl_{2}(cl_{2}(x )\wedge(x \vee e ))\wedge ((cl_{2}(x )\wedge(x \vee e ))\vee e )$.
By Lemma \ref{le2}, we can obtain that $cl_{2}(x )=cl_{2}(cl_{2}(x )\wedge(x \vee e ))$ and $x \vee e =(cl_{2}(x )\wedge(x \vee e ))\vee e $. Thus $U (x , U (y ,z ))=U (U (x ,y ),z )$.

10. If $x \in ]0 ,e [$, $y \in I_{e }$ and $z \in ]e ,1 ]$, then $U (x , U (y ,z ))=U (x ,cl_{2}(y )\wedge(y \vee e ))=0 =U (0 ,z )=U (U (x ,y ),z )$ and $U (y , U (x ,z ))=U (y ,cl_{1}(x )\wedge(x \vee e ))=0 $. Thus $U (x , U (y ,z ))$ $=U (U (x ,y ),z )=U (y , U (x ,z ))$.

Thus, the associative property is obtained by combining Proposition \ref{pro2.1} and the commutative property of $U$.

Therefore, $U $ is a uninorm on $L$ with the neutral element $e $.
\end{proof}

\begin{remark}\label{rema21}
Note that the uninorm $U :L^{2}\rightarrow L$ in Theorem \ref{th32} can be also defined in the following way:
\begin{eqnarray*}
U (x ,y )=\left\{
\begin{array}{ll}
S (x , y ) & $if$\ (x ,y )\in ]e ,1 ]^{2},\\
e  & $if$\ (x ,y )\in \{e \}\times \{e \},\\
x  & $if$\ (x ,y )\in (L\setminus\{0 ,e \}) \times \{e \},\\
y  & $if$\ (x ,y )\in \{e \}\times (L\setminus\{0 ,e \}),\\
cl_{1}(x )\wedge (x \vee e ) & $if$\ (x ,y )\in  ]0 ,e [\times ]e ,1 ],\\
cl_{1}(y )\wedge (y \vee e ) & $if$\ (x ,y )\in ]e ,1 ]\times ]0 ,e [,\\
cl_{2}(x )\wedge (x \vee e ) & $if$\ (x ,y )\in I_{e }\times ]e ,1 ],\\
cl_{2}(y )\wedge (y \vee e ) & $if$\ (x ,y )\in ]e ,1 ]\times I_{e },\\
0  & $if$\ (x ,y )\in \{0 \}\times L \cup L\times\{0 \} \cup I_{e }\times I_{e }\cup (I_{e }\cup ]0 ,e [)\times]0 ,e [ \\
  & \ \ \  \cup ]0 ,e [\times(I_{e }\cup ]0 ,e [).
\end{array} \right.
\end{eqnarray*}

\end{remark}

\begin{remark}
From Remark \ref{rema21}, the structure of the uninorm $U :L^{2}\rightarrow L$ is illustrated in Fig.1., where $ \ast $ denotes $ cl_{1}(x )\wedge (x \vee e ), \star $ denotes $ cl_{1}(y )\wedge (y \vee e ), \circ $ denotes $ cl_{2}(x )\wedge (x \vee e )$ and $ \bullet $ denotes $ cl_{2}(y )\wedge (y \vee e ).$
\end{remark}

The next example illustrates the construction method of uninorms on bounded lattices in Theorem \ref{th32}.
\begin{example}\label{ex1}
Given a bounded lattice $L_{1}=\{0 ,a ,b ,e ,m ,k ,s ,n ,j ,1 \}$ depicted in Fig.2., a t-conorm $S $ on $[e ,1 ]$ defined by $S (x ,y )=x \vee y $ for all $x ,y \in [e ,1 ]$ and closure operators  $cl_{1}$ and $cl_{2}$ on $L_{1}$ shown in  Table \ref{Tab:89}.
It is easy to see that the closure operators $cl_{1}$ and $cl_{2}$ on $L_{1}$ satisfy the conditions in Theorem \ref{th32}, i.e., $cl_{1}(x )\leq cl_{2}(x )$ for all $x \in L_{1}$, $cl_{1}(x )\notin [e ,1 ]$ for all $x \in ]0 ,e [$ and $cl_{2}(x )\notin [e ,1 ]$ for all $x \in I_{e }$. Using the construction method in Theorem \ref{th32}, we can obtain a uninorm $U :L_{1}^{2}\rightarrow L_{1}$ with the neutral element $e$, as shown in Table \ref{Tab:01}.
\end{example}

\begin{minipage}{11pc}
\setlength{\unitlength}{0.75pt}\begin{picture}(600,230)
\put(158,25){\makebox(0,0)[l]{\footnotesize$]0 ,e [$}}
\put(72,73){\makebox(0,0)[l]{\footnotesize$]0 ,e [$}}
\put(72,130){\makebox(0,0)[l]{\footnotesize$]e ,1 ]$}}
\put(258,25){\makebox(0,0)[l]{\footnotesize$]e ,1 ]$}}
\put(205,25){\makebox(0,0)[l]{\footnotesize$\{e \}$}}
\put(366,25){\makebox(0,0)[l]{\footnotesize$I_{e }$}}
\put(80,103){\makebox(0,0)[l]{\footnotesize$\{e \}$}}
\put(80,43){\makebox(0,0)[l]{\footnotesize$\{0 \}$}}
\put(105,25){\makebox(0,0)[l]{\footnotesize$\{0 \}$}}
\put(90,193){\makebox(0,0)[l]{\footnotesize$I_{e }$}}
\put(168,73){\makebox(0,0)[l]{\footnotesize$0 $}}
\put(168,103){\makebox(0,0)[l]{\footnotesize$x $}}
\put(168,130){\makebox(0,0)[l]{\footnotesize$\ast$}}
\put(168,193){\makebox(0,0)[l]{\footnotesize$0 $}}
\put(118,73){\makebox(0,0)[l]{\footnotesize$0 $}}
\put(118,103){\makebox(0,0)[l]{\footnotesize$0 $}}
\put(118,130){\makebox(0,0)[l]{\footnotesize$0 $}}
\put(118,193){\makebox(0,0)[l]{\footnotesize$0 $}}
\put(218,73){\makebox(0,0)[l]{\footnotesize$y $}}
\put(218,103){\makebox(0,0)[l]{\footnotesize$e $}}
\put(218,130){\makebox(0,0)[l]{\footnotesize$y $}}
\put(218,193){\makebox(0,0)[l]{\footnotesize$y $}}
\put(268,73){\makebox(0,0)[l]{\footnotesize$\star$}}
\put(268,103){\makebox(0,0)[l]{\footnotesize$x $}}
\put(250,130){\makebox(0,0)[l]{\footnotesize$S (x ,y )$}}
\put(268,193){\makebox(0,0)[l]{\footnotesize$\bullet$}}
\put(368,73){\makebox(0,0)[l]{\footnotesize$0 $}}
\put(368,103){\makebox(0,0)[l]{\footnotesize$x $}}
\put(368,130){\makebox(0,0)[l]{\footnotesize$\circ$}}
\put(368,193){\makebox(0,0)[l]{\footnotesize$0 $}}

\put(118,43){\makebox(0,0)[l]{\footnotesize$0 $}}
\put(168,43){\makebox(0,0)[l]{\footnotesize$0 $}}
\put(218,43){\makebox(0,0)[l]{\footnotesize$0 $}}
\put(268,43){\makebox(0,0)[l]{\footnotesize$0 $}}
\put(368,43){\makebox(0,0)[l]{\footnotesize$0 $}}

\put(270,43){\line(1,0){150}}
\put(270,43){\line(-1,0){150}}
\put(420,43){\line(0,1){180}}
\put(120,43){\line(0,1){180}}
\put(270,223){\line(1,0){150}}
\put(270,223){\line(-1,0){150}}
\put(320,43){\line(0,1){180}}
\put(220,43){\line(0,1){180}}
\put(270,103){\line(1,0){150}}
\put(270,103){\line(-1,0){150}}
\put(270,163){\line(-1,0){150}}
\put(270,163){\line(1,0){150}}

\put(130,0){\emph{Fig.1. The uninorm $U $ in Theorem \ref{th32}.}}
\end{picture}
\end{minipage}

\begin{minipage}{11pc}
\setlength{\unitlength}{0.75pt}\begin{picture}(600,220)
\put(266,37){$\bullet$}\put(267,30){\makebox(0,0)[l]{\footnotesize$0 $}}
\put(327,98){$\bullet$}\put(340,103){\makebox(0,0)[l]{\footnotesize$b $}}
\put(266,160){$\bullet$}\put(250,171){\makebox(0,0)[l]{\footnotesize$j $}}
\put(204,99){$\bullet$}\put(184,104){\makebox(0,0)[l]{\footnotesize$k $}}
\put(236,132){$\bullet$}\put(216,139){\makebox(0,0)[l]{\footnotesize$n $}}
\put(266,100){$\bullet$}\put(280,104){\makebox(0,0)[l]{\footnotesize$s $}}
\put(235,69){$\bullet$}\put(218,65){\makebox(0,0)[l]{\footnotesize$m $}}
\put(298,69){$\bullet$}\put(308,65){\makebox(0,0)[l]{\footnotesize$a $}}
\put(298,132){$\bullet$}\put(312,138){\makebox(0,0)[l]{\footnotesize$e $}}
\put(266,190){$\bullet$}\put(267,207){\makebox(0,0)[l]{\footnotesize$1 $}}

\put(270,41){\line(-1,1){61}}
\put(270,165){\line(0,1){30}}
\put(209,104){\line(1,1){61}}
\put(240,73){\line(1,1){30}}
\put(270,105){\line(-1,1){30}}
\put(270,41){\line(1,1){61}}
\put(331,105){\line(-1,1){60}}

\put(170,-10){\emph{Fig.2. The bounded lattice $L_{1}$.}}
\end{picture}\
\end{minipage}

\begin{table}[htbp]
\centering
\caption{The closure operators $cl_{1} $ and $cl_{2} $ on $L_{1}$.}
\label{Tab:89}\

\begin{tabular}{c|c c c c c c c c c c}
\hline
  $x $ & $0 $ & $a $ & $b $ & $e $ & $m $ & $k $ & $s $ & $n $ & $j $ & $1 $ \\
\hline
  $cl_{1}(x )$ & $0 $ & $b $ & $b $ & $e $ & $k $ & $k $ & $n $ & $n $ & $j $ & $1 $ \\
\hline
  $cl_{2}(x )$ & $k $ & $j $ & $j $ & $j $ & $k $ & $k $ & $n $ & $n $ & $j $ & $1 $ \\
\hline
\end{tabular}
\end{table}

\begin{table}[htbp]
\centering
\caption{The uninorm $U  $ on $L_{1}$.}
\label{Tab:01}\

\begin{tabular}{c|c c c c c c c c c c}
\hline
  $U  $ & $0 $ & $a $ & $b $ & $e $ & $m $ & $k $ & $s $ & $n $ & $j $ & $1 $ \\
\hline
  $0 $ & $0 $ & $0 $ & $0 $ & $0 $ & $0 $ & $0 $ & $0 $ & $0 $ & $0 $ & $0 $ \\

  $a $ & $0 $ & $0 $ & $0 $ & $a $ & $0 $ & $0 $ & $0 $ & $0 $ & $b $ & $b $ \\

  $b $ & $0 $ & $0 $ & $0 $ & $b $ & $0 $ & $0 $ & $0 $ & $0 $ & $b $ & $b $ \\

  $e $ & $0 $ & $a $ & $b $ & $e $ & $m $ & $k $ & $s $ & $n $ & $j $ & $1 $ \\

  $m $ & $0 $ & $0 $ & $0 $ & $m $ & $0 $ & $0 $ & $0 $ & $0 $ & $k $ & $k $ \\

  $k $ & $0 $ & $0 $ & $0 $ & $k $ & $0 $ & $0 $ & $0 $ & $0 $ & $k $ & $k $ \\

  $s $ & $0 $ & $0 $ & $0 $ & $s $ & $0 $ & $0 $ & $0 $ & $0 $ & $n $ & $n $ \\

  $n $ & $0 $ & $0 $ & $0 $ & $n $ & $0 $ & $0 $ & $0 $ & $0 $ & $n $ & $n $ \\

  $j $ & $0 $ & $b $ & $b $ & $j $ & $k $ & $k $ & $n $ & $n $ & $j $ & $1 $ \\

  $1 $ & $0 $ & $b $ & $b $ & $1 $ & $k $ & $k $ & $n $ & $n $ & $1 $ & $1 $ \\
\hline
\end{tabular}
\end{table}\

 In Example \ref{ex1}, it is easily observed that $U (j ,a )=b \neq a $ for $a \in [0 ,e [$ and $j \in ]e ,1 ]$. So $U \notin \mathcal{U}_{min}^{*}\cup \mathcal{U}_{min}^{1}$. This implies that a uninorm $U$ constructed in Theorem \ref{th32}   does not necessarily belong to $ \mathcal{U}_{min}^{*}\cup \mathcal{U}_{min}^{1}$ in general. Now, we impose a requirement on $L$ to ensure that $U \in \mathcal{U}_{min}^{*}$.



\begin{proposition}\label{re156} Let $U $ be a uninorm in Theorem \ref{th32}.
 If $]0 ,e [\subseteq\{x \}$ for some $x\in L$, or if  $x\parallel y$ for  $x ,y \in ]0 ,e [$, then $ U \in \mathcal{U}_{min}^{*}$.
\end{proposition}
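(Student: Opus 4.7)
The plan is to unpack what $U \in \mathcal{U}_{min}^{*}$ means here: we must verify that $U(x,y) = y$ whenever $(x,y) \in ]e,1] \times [0,e[$. The case $y = 0$ is immediate from the ``otherwise'' clause in the definition of $U$ together with commutativity, since $(x,0)$ yields $U(x,0) = 0 = y$. So the whole content of the proposition lies in the subcase $(x,y) \in ]e,1] \times ]0,e[$, where by construction $U(x,y) = cl_{1}(y) \wedge (y \vee e) = cl_{1}(y) \wedge e$, and we must show $cl_{1}(y) \wedge e = y$.

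First I would pin down where $cl_{1}(y) \wedge e$ lives. From (CL1) we have $y \leq cl_{1}(y)$, and by the necessary condition of Theorem \ref{th32} we have $cl_{1}(y) \notin [e,1]$, in particular $cl_{1}(y) \not\geq e$. Combining these, $y \leq cl_{1}(y) \wedge e$ and $cl_{1}(y) \wedge e < e$, while $cl_{1}(y) \wedge e \geq y > 0$. Hence $cl_{1}(y) \wedge e \in \, ]0,e[$. So both $y$ and $cl_{1}(y)\wedge e$ lie in $]0,e[$ and satisfy $y \leq cl_{1}(y) \wedge e$.

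At this point each of the two hypotheses finishes the proof quickly. If $]0,e[ \subseteq \{x\}$ for some $x \in L$, then $]0,e[$ has at most one element, and since both $y$ and $cl_{1}(y)\wedge e$ belong to it, they must coincide, giving $cl_{1}(y) \wedge e = y$. If instead every pair of elements of $]0,e[$ is incomparable, then the comparability $y \leq cl_{1}(y)\wedge e$ with both elements in $]0,e[$ forces equality. In either case $U(x,y) = y$, which together with the $y=0$ case yields $U \in \mathcal{U}_{min}^{*}$.

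The only subtle point I anticipate is carefully justifying that $cl_{1}(y)\wedge e$ actually lies in $]0,e[$ rather than being equal to $e$ or escaping into $I_{e}$; this hinges on meeting with $e$, which automatically gives $cl_{1}(y)\wedge e \leq e$, and on the strict inequality $cl_{1}(y)\wedge e < e$ provided by $cl_{1}(y)\notin [e,1]$. Once that localization is in place, the proposition reduces to an elementary order-theoretic observation about $]0,e[$, and no further use of the closure axioms or of $cl_{2}$ is required.
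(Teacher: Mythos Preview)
Your proof is correct and follows essentially the same approach as the paper's: you localize $cl_{1}(y)\wedge e$ to the interval $[y,e[$ using (CL1) and the condition $cl_{1}(y)\notin[e,1]$, and then use each hypothesis to force equality with $y$. The paper phrases the final step as ``$]x,e[=\emptyset$'' (which is an immediate consequence of either hypothesis) rather than your direct comparability contradiction, but the arguments are the same in substance.
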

\begin{proof}
It is easily seen that $ U \in \mathcal{U}_{min}^{*}$ if $]0 ,e [=\emptyset$. Next, we consider two cases to show that $U \in \mathcal{U}_{min}^{*}$.

Case 1: $]0 ,e [=\{x \}$  for some $x\in L$.

Take any $x \in ]0 ,e [$.  On one hand, since $x \leq cl_{1}(x )$ and $x <x \vee e $, we have $x \leq cl_{1}(x )\wedge(x \vee e )$. On the other hand, since $cl_{1}(x )\notin [e ,1 ]$ and $x \vee e =e $, we have $ cl_{1}(x )\wedge(x \vee e )<e $.  Then $ cl_{1}(x )\wedge(x \vee e )\in [x ,e [$. However, $]x ,e [=\emptyset$. Thus, $ cl_{1}(x )\wedge(x \vee e )=x $. Therefore, $U (x ,y )=x $ for all $y \in ]e ,1 ]$. Moreover, by Theorem \ref{th32}, we know that $U (0 ,y )=0 $ for all $y \in ]e ,1 ]$. Thus, $ U \in \mathcal{U}_{min}^{*}$.

Case 2: $x \parallel y $ for  $x ,y \in ]0 ,e [$.

By Theorem \ref{th32}, we can obtain $U (x ,y )=cl_{1}(x )\wedge (x \vee e )$ for all $x \in ]0 ,e [$ and $y \in ]e ,1 ]$. Since $x <x \vee e  =e $, $x \leq cl_{1}(x )$ and $cl_{1}(x )\notin [e ,1 ]$, we have
$cl_{1}(x )\wedge (x \vee e )\in [x ,e [$. However, $]x ,e [=\emptyset$. Thus, $cl_{1}(x )\wedge (x \vee e )= x $. Therefore, for $x \in ]0 ,e [$, $U (x ,y )=x $ for all $y \in ]e ,1 ]$. Moreover, by Theorem \ref{th32}, we know that $U (0 ,y )=0 $ for all $y \in ]e ,1 ]$. Thus, $ U \in \mathcal{U}_{min}^{*}$.
\end{proof}

In particular, if $cl_{1}=cl_{2}$  in Theorem \ref{th32}, then we can derive a new construction method for uninorms using a single closure operator and a t-conorm.


\begin{proposition}\label{co122}
Let $e \in L\setminus\{0 ,1 \}$,
$S $ be a t-conorm on $[e ,1 ]$ and $cl$ be a closure operator on $L$. Define $U :L^{2}\rightarrow L$  as follows:
\begin{eqnarray*}
U (x ,y )=\left\{
\begin{array}{ll}
S (x , y ) & $if$\ (x ,y )\in [e ,1 ]^{2},\\
x  & $if$\ (x ,y )\in (I_{e } \cup ]0 ,e [)\times \{e \},\\
y  & $if$\ (x ,y )\in \{e \}\times   (I_{e }\cup ]0 ,e [),\\
cl(x )\wedge (x \vee e ) & $if$\ (x ,y )\in (I_{e }\cup ]0 ,e [)\times ]e ,1 ],\\
cl(y )\wedge (y \vee e ) & $if$\ (x ,y )\in ]e ,1 ]\times (I_{e }\cup ]0 ,e [),\\
0  &  otherwise.
\end{array} \right.
\end{eqnarray*}
Then $U $  is a uninorm on $L$ if and only if $cl(x )\notin [e ,1 ]$ for all $x \in I_{e }\cup ]0 ,e [$.
\end{proposition}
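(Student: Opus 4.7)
The plan is to deduce this proposition as a direct specialization of Theorem \ref{th32} by setting $cl_{1}=cl_{2}=cl$. First I would verify that the defining formula for $U$ in Proposition \ref{co122} coincides with the formula in Theorem \ref{th32} under this identification: the clauses for $]0,e[\times]e,1]$ and for $I_{e}\times]e,1]$ (and their symmetric counterparts) both collapse to $cl(x)\wedge(x\vee e)$, so the piecewise definitions merge into the single clause $(I_{e}\cup]0,e[)\times]e,1]$ appearing in the proposition. The residual ``otherwise'' branch yielding $0$ is unchanged.

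Next I would check that the hypotheses of Theorem \ref{th32} reduce correctly. The comparability assumption $cl_{1}(x)\leq cl_{2}(x)$ on $L\setminus[e,1]$ is trivially satisfied when $cl_{1}=cl_{2}=cl$. The two non-trivial conditions ``$cl_{1}(x)\notin[e,1]$ for all $x\in]0,e[$'' and ``$cl_{2}(x)\notin[e,1]$ for all $x\in I_{e}$'' merge into the single condition ``$cl(x)\notin[e,1]$ for all $x\in I_{e}\cup]0,e[$,'' which is exactly the condition stated in Proposition \ref{co122}.

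For the sufficiency direction, with the above identifications in place, Theorem \ref{th32} immediately yields that $U$ is a uninorm on $L$ with neutral element $e$. For the necessity direction, if $U$ is a uninorm, then the same two associativity-based arguments used in the necessity part of Theorem \ref{th32} (namely computing $U(U(1,x),x)$ versus $U(1,U(x,x))$ for $x\in]0,e[$ and for $x\in I_{e}$, and deriving a contradiction when $cl(x)\in[e,1]$) apply verbatim to $cl$, yielding $cl(x)\notin[e,1]$ for all $x\in I_{e}\cup]0,e[$.

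Since the verification is almost purely bookkeeping, I do not expect any serious obstacle; the only delicate point is to confirm that the formula in Proposition \ref{co122} really is obtained from the formula in Theorem \ref{th32} on the nose (no case is lost or double-counted when merging the clauses for $]0,e[$ and $I_{e}$), and that the merged hypothesis on $cl$ is exactly the conjunction of the two hypotheses on $cl_{1}$ and $cl_{2}$. Once this is observed, the proof reduces to a one-line appeal to Theorem \ref{th32}.
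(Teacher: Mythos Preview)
Your proposal is correct and follows exactly the paper's approach: the paper introduces Proposition~\ref{co122} by stating ``In particular, if $cl_{1}=cl_{2}$ in Theorem~\ref{th32}, then we can derive a new construction method\ldots'' and gives no separate proof, treating the proposition as an immediate specialization of Theorem~\ref{th32}. Your bookkeeping checks (that the piecewise clauses merge correctly and that the two conditions on $cl_{1},cl_{2}$ collapse to the single condition on $cl$) are precisely what is needed to justify this specialization.
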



Moreover, if  $cl_{1}(x )=cl_{2}(x )=x$ for all $x \in L$ in Theorem \ref{th32}, that is,  if $cl(x) = x$ as stated in Proposition \ref{co122},  then we  obtain the existing result in the literature.


\begin{remark}\label{re3}

In Proposition \ref{co122}, if $cl(x )=x $ for all $x \in L$, then $cl(x )\wedge (x \vee e )=x $ and $cl(x )=x \notin [e ,1 ]$ for all $x \in I_{e }\cup ]0 ,e [$. In this case, the condition in Proposition \ref{co122} naturally holds. Then the uninorm $U$ constructed in Proposition \ref{co122} is exactly  the uninorm $U_{s}: L^{2}\rightarrow L$ constructed by Kara\c{c}al and Mesiar (\cite{FK15}, Theorem 1), which is defined as follows:
\begin{eqnarray*}
U_{s}(x ,y )=\left\{
\begin{array}{ll}
S (x ,y ) & $if$\ (x ,y )\in [e ,1 ]^{2},\\
x  & $if$\ (x ,y )\in (I_{e }\cup[0 ,e [)\times [e ,1 ],\\
y  & $if$\ (x ,y )\in [e ,1 ]\times(I_{e }\cup [0 ,e [),\\
0  & otherwise.
\end{array} \right.
\end{eqnarray*}
\end{remark}

\begin{remark}\label{co123}

If we take $cl_{1}(x )=x $ for all $x \in L$ in Theorem \ref{th32},
then we can obtain another new  construction method of uninorms by a closure operator and a t-conorm as follow. Specifically, let $e \in L\setminus\{0 ,1 \}$,
$S $ be a t-conorm on $[e ,1 ]$ and $cl $ be a closure operator on $L$. Define $U :L^{2}\rightarrow L$  as follows:
\begin{eqnarray*}
U (x ,y )=\left\{
\begin{array}{ll}
S (x ,y ) & $if$\ (x ,y )\in [e ,1 ]^{2},\\
x  & $if$\ (x ,y )\in [0 ,e [\times [e ,1 ]\cup I_{e }\times \{e \},\\
y  & $if$\ (x ,y )\in [e ,1 ]\times  [0 ,e [\cup \{e \}\times I_{e },\\
cl(x )\wedge (x \vee e ) & $if$\ (x ,y )\in I_{e }\times  ]e ,1 ],\\
cl(y )\wedge (y \vee e ) & $if$\ (x ,y )\in ]e ,1 ]\times I_{e },\\
0  & otherwise.
\end{array} \right.
\end{eqnarray*}
Then $U $
is a uninorm on $L$ if and only if $cl(x )\notin [e ,1 ]$ for all $x \in I_{e }$.
\end{remark}

It is easy to see that the uninorm $U$ in Remark \ref{co123} belongs to $\mathcal{U}_{min}^{*}$. In the following example, we   show that the uninorm $U $ in Remark \ref{co123} does not belong to $\mathcal{U}_{min}\cup \mathcal{U}_{max}^{r}$.


\begin{example}\label{ex2}
Given a bounded lattice $L_{2}=\{0 ,a ,e ,b ,m ,k ,s ,n ,1 \}$ depicted in Fig.3., a t-conorm $S $ on $[e ,1 ]$ defined by $S (x ,y )=x \vee y $ for all $x ,y \in [e ,1 ]$ and a closure operator $cl $ on $L_{2}$ defined by $cl(x )=x \vee k $ for all $x \in L_{2}$.
Then the closure operator $cl $ satisfies the condition in Remark \ref{co123}, i.e., $cl(x )\notin [e ,1 ]$ for all $x \in I_{e }$. Using the construction method in Remark \ref{co123}, we can obtain a uninorm $U :L_{2}^{2}\rightarrow L_{2}$ with the neutral element $e$, as shown in Table \ref{Tab:05}.  It is easy to see that  $U (b ,s )=n \neq s $ and $U (a ,s )=0 \neq a $. So $U \notin \mathcal{U}_{min}\cup \mathcal{U}_{max}^{r}$.
\end{example}

\begin{minipage}{11pc}
\setlength{\unitlength}{0.75pt}\begin{picture}(600,190)
\put(266,37){$\bullet$}\put(267,30){\makebox(0,0)[l]{\footnotesize$0 $}}
\put(327,98){$\bullet$}\put(340,103){\makebox(0,0)[l]{\footnotesize$e $}}
\put(266,160){$\bullet$}\put(267,177){\makebox(0,0)[l]{\footnotesize$1 $}}
\put(204,99){$\bullet$}\put(185,104){\makebox(0,0)[l]{\footnotesize$k $}}
\put(236,132){$\bullet$}\put(218,139){\makebox(0,0)[l]{\footnotesize$n $}}
\put(266,100){$\bullet$}\put(280,104){\makebox(0,0)[l]{\footnotesize$s $}}
\put(235,69){$\bullet$}\put(222,65){\makebox(0,0)[l]{\footnotesize$m $}}
\put(298,69){$\bullet$}\put(308,65){\makebox(0,0)[l]{\footnotesize$a $}}
\put(298,132){$\bullet$}\put(312,138){\makebox(0,0)[l]{\footnotesize$b $}}

\put(270,41){\line(-1,1){61}}
\put(209,104){\line(1,1){61}}
\put(240,73){\line(1,1){30}}
\put(270,105){\line(-1,1){30}}
\put(270,41){\line(1,1){61}}
\put(331,105){\line(-1,1){60}}

\put(170,-10){\emph{Fig.3. The bounded lattice $L_{2}$.}}
\end{picture}\
\end{minipage}\\

\begin{table}[htbp]
\centering
\caption{The uninorm $U  $ on $L_{2}$.}
\label{Tab:05}\

\begin{tabular}{c|c c c c c c c c c}
\hline
  $U  $ & $0 $ & $a $ & $e $ & $m $ & $k $ & $s $ & $n $ & $b $ & $1 $ \\
\hline
  $0 $ & $0 $ & $0 $ & $0 $ & $0 $ & $0 $ & $0 $ & $0 $ & $0 $ & $0 $ \\

  $a $ & $0 $ & $0 $ & $a $ & $0 $ & $0 $ & $0 $ & $0 $ & $a $ & $a $ \\

  $e $ & $0 $ & $a $ & $e $ & $m $ & $k $ & $s $ & $n $ & $b $ & $1 $ \\

  $m $ & $0 $ & $0 $ & $m $ & $0 $ & $0 $ & $0 $ & $0 $ & $k $ & $k $ \\

  $k $ & $0 $ & $0 $ & $k $ & $0 $ & $0 $ & $0 $ & $0 $ & $k $ & $k $ \\

  $s $ & $0 $ & $0 $ & $s $ & $0 $ & $0 $ & $0 $ & $0 $ & $n $ & $n $ \\

  $n $ & $0 $ & $0 $ & $n $ & $0 $ & $0 $ & $0 $ & $0 $ & $n $ & $n $ \\

  $b $ & $0 $ & $a $ & $b $ & $k $ & $k $ & $n $ & $n $ & $b $ & $1 $ \\

  $1 $ & $0 $ & $a $ & $1 $ & $k $ & $k $ & $n $ & $n $ & $1 $ & $1 $ \\
\hline
\end{tabular}
\end{table}\

Dually, we   present a
method to construct uninorms using two interior operators and a t-norm
on bounded lattices,  without proof.

\begin{theorem}\label{th31}
Let $e \in L\setminus\{0 ,1 \}$,
$T $ be a t-norm on $[0 ,e ]$, and $int_{1}$ and $int_{2}$ be interior operators on $L$ with $int_{2}(x)\leq int_{1}(x)$ for all $x \in L\setminus[0 ,e ]$. Define $U :L^{2}\rightarrow L$ as follows:
\begin{eqnarray*}
U (x ,y )=\left\{
\begin{array}{ll}
T (x , y ) & $if$\ (x ,y )\in [0 ,e ]^{2},\\
x  & $if$\ (x ,y )\in (I_{e }\cup ]e ,1 [)\times \{e \},\\
y  & $if$\ (x ,y )\in \{e \}\times (I_{e }\cup ]e ,1 [),\\
int_{2}(x )\vee (x \wedge e ) & $if$\ (x ,y )\in I_{e } \times[0 ,e [,\\
int_{2}(y )\vee (y \wedge e ) & $if$\ (x ,y )\in [0 ,e [ \times I_{e },\\
int_{1}(x )\vee (x \wedge e ) & $if$\ (x ,y )\in ]e ,1 [  \times [0 ,e [,\\
int_{1}(y )\vee (y \wedge e ) & $if$\ (x ,y )\in [0 ,e [   \times ]e ,1 [,\\
1  & otherwise.
\end{array} \right.
\end{eqnarray*}
Then $U $ is a uninorm on $L$ if and only if $int_{2}(x )\notin [0 ,e ]$ for all $x \in I_{e }$ and $int_{1}(x )\notin [0 ,e ]$ for all $x \in ]e ,1 [$.
\end{theorem}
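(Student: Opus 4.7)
The plan is to mirror the proof of Theorem~\ref{th32} under the order-reversal exchanging $0\leftrightarrow 1$, $\wedge\leftrightarrow\vee$, closure with interior, and $]e,1]$ with $[0,e[$. The dual of Lemma~\ref{le2} (stated immediately after it in the excerpt) replaces Lemma~\ref{le2}, and (IN4) replaces (CL4); commutativity and the neutrality of $e$ remain immediate from the piecewise definition, so the real work lies in necessity, monotonicity, and associativity.

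For necessity, suppose first that $int_1(x)\in[0,e]$ for some $x\in\,]e,1[$. Since $x\wedge e=e$ and $int_1(x)\le e$, one computes $U(0,x)=int_1(x)\vee e=e$, so $U(U(0,x),x)=U(e,x)=x$; but $(x,x)$ falls into the \emph{otherwise} clause, hence $U(x,x)=1$ and $U(0,U(x,x))=U(0,1)=1$, contradicting $x\neq 1$. For $x\in I_e$, one first rules out $int_2(x)\in[e,1]$ using (IN1): $e\le int_2(x)\le x$ would contradict $x\parallel e$. What remains is $int_2(x)\in[0,e[$, whence $int_2(x)\le x\wedge e$ and $U(0,x)=x\wedge e\in[0,e[$; then $U(U(0,x),x)=int_2(x)\vee(x\wedge e)=x\wedge e$ while $U(0,U(x,x))=U(0,1)=1$, again impossible.

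For sufficiency, the two key ``image-in-region'' facts dual to those used in Theorem~\ref{th32} are: if $int_1(x)\notin[0,e]$ and $x\in\,]e,1[$, then $int_1(x)\vee(x\wedge e)\in\,]e,1[$; if $int_2(x)\notin[0,e]$ and $x\in I_e$, then $int_2(x)\vee(x\wedge e)\in I_e$. The first holds because the value sits strictly above $e$ (as $int_1(x)\nleq e$) and below $x<1$; the second because the value is $\le x$ (so it cannot be $\ge e$, which would force $e\le x$) and $\ge int_2(x)\notin[0,e]$ (so it cannot be $\le e$), leaving only $v\parallel e$. Monotonicity is then verified by the same exhaustive region-wise case split as in Theorem~\ref{th32}, simply reflected through $e$, with (IN4) and the two facts above furnishing each comparison.

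Associativity is the main bookkeeping obstacle. Invoking Proposition~\ref{pro2.1} I reduce to a finite list of region triples; most cases collapse because one side evaluates to $1$ through the \emph{otherwise} clause, and the only substantive cases are the duals of Cases~8--10 in Theorem~\ref{th32}, namely $x\in\,]e,1[$ or $x\in I_e$ paired with $y,z\in[0,e[$. In each, the dual of Lemma~\ref{le2} yields $int_i(int_i(x)\vee(x\wedge e))=int_i(x)$ together with $(int_i(x)\vee(x\wedge e))\wedge e=x\wedge e$, which is exactly what is needed for the two bracketings to agree. I expect no conceptual difficulty beyond keeping this case book straight.
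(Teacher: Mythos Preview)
Your proposal is correct and takes essentially the same approach as the paper: the paper simply states the theorem ``dually'' and gives no proof, so your plan to mirror the proof of Theorem~\ref{th32} under the order-reversing duality (swapping $\wedge\leftrightarrow\vee$, $cl\leftrightarrow int$, $]e,1]\leftrightarrow[0,e[$, and invoking the dual of Lemma~\ref{le2}) is exactly what the paper intends. Your necessity argument, the two image-in-region facts, and the identification of the substantive associativity cases are all accurate dualizations; the only minor imprecision is that the dual of Case~10 actually collapses to $1$ on all three bracketings just like the earlier trivial cases, so only the duals of Cases~8 and~9 genuinely require the idempotency identity $int_i(int_i(x)\vee(x\wedge e))=int_i(x)$.
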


\begin{remark}\label{rema22}
Notice that the uninorm $U :L^{2}\rightarrow L$ in Theorem \ref{th31} can also be defined such that
\begin{eqnarray*}
U (x ,y )=\left\{
\begin{array}{ll}
T (x , y ) & $if$\  (x ,y )\in [0 ,e [^{2},\\
e  & $if$\  (x ,y )\in \{e \}\times \{e \},\\
x  & $if$\  (x ,y )\in (L\setminus\{e ,1 \})\times \{e \},\\
y  & $if$\  (x ,y )\in \{e \}\times (L\setminus\{e ,1 \}),\\
int_{2}(x )\vee (x \wedge e ) & $if$\  (x ,y )\in I_{e }\times [0 ,e [,\\
int_{2}(y )\vee (y \wedge e ) & $if$\  (x ,y )\in [0 ,e [\times I_{e },\\
int_{1}(x )\vee (x \wedge e ) & $if$\  (x ,y )\in ]e ,1 [\times [0 ,e [,\\
int_{1}(y )\vee (y \wedge e ) & $if$\  (x ,y )\in [0 ,e [\times ]e ,1 [,\\
1  & $if$\  (x ,y )\in \{1 \}\times L\cup L\times\{1 \}\cup I_{e }\times I_{e } \cup (I_{e }\cup ]e ,1 [)\times]e ,1 [ \\
 &\cup ]e ,1 [\times(I_{e }\cup ]e ,1 [).
\end{array} \right.
\end{eqnarray*}
\end{remark}

\begin{remark}
From Remark \ref{rema22}, the structure of the uninorm $U$ is illustrated in Fig.4., where $ \oplus$ denotes $int_{1}(x )\vee (x \wedge e ), \ominus$ denotes $int_{1}(y )\vee (y \wedge e ), \otimes$ denotes $int_{2}(x )\vee (x \wedge e )$ and $ \oslash$ denotes $int_{2}(y )\vee (y \wedge e ).$
\end{remark}

\begin{minipage}{11pc}
\setlength{\unitlength}{0.75pt}\begin{picture}(600,230)
\put(158,25){\makebox(0,0)[l]{\footnotesize$[0 ,e [$}}
\put(72,73){\makebox(0,0)[l]{\footnotesize$[0 ,e [$}}
\put(72,130){\makebox(0,0)[l]{\footnotesize$]e ,1 [$}}
\put(256,25){\makebox(0,0)[l]{\footnotesize$]e ,1 [$}}
\put(209,25){\makebox(0,0)[l]{\footnotesize$\{e \}$}}
\put(366,25){\makebox(0,0)[l]{\footnotesize$I_{e }$}}
\put(80,103){\makebox(0,0)[l]{\footnotesize$\{e \}$}}
\put(90,193){\makebox(0,0)[l]{\footnotesize$I_{e }$}}
\put(147,73){\makebox(0,0)[l]{\footnotesize$T (x ,y )$}}
\put(168,101){\makebox(0,0)[l]{\footnotesize$x $}}
\put(168,132){\makebox(0,0)[l]{\footnotesize$\ominus$}}
\put(168,193){\makebox(0,0)[l]{\footnotesize$\oslash$}}
\put(218,73){\makebox(0,0)[l]{\footnotesize$y $}}
\put(218,101){\makebox(0,0)[l]{\footnotesize$e $}}
\put(218,132){\makebox(0,0)[l]{\footnotesize$y $}}
\put(218,193){\makebox(0,0)[l]{\footnotesize$y $}}
\put(268,73){\makebox(0,0)[l]{\footnotesize$\oplus$}}
\put(268,101){\makebox(0,0)[l]{\footnotesize$x $}}
\put(268,132){\makebox(0,0)[l]{\footnotesize$1 $}}
\put(268,193){\makebox(0,0)[l]{\footnotesize$1 $}}
\put(368,73){\makebox(0,0)[l]{\footnotesize$\otimes$}}
\put(368,101){\makebox(0,0)[l]{\footnotesize$x $}}
\put(368,132){\makebox(0,0)[l]{\footnotesize$1 $}}
\put(368,193){\makebox(0,0)[l]{\footnotesize$1 $}}
\put(80,163){\makebox(0,0)[l]{\footnotesize$\{1 \}$}}
\put(310,25){\makebox(0,0)[l]{\footnotesize$\{1 \}$}}
\put(318,73){\makebox(0,0)[l]{\footnotesize$1 $}}
\put(318,101){\makebox(0,0)[l]{\footnotesize$1 $}}
\put(318,132){\makebox(0,0)[l]{\footnotesize$1 $}}
\put(318,193){\makebox(0,0)[l]{\footnotesize$1 $}}
\put(168,163){\makebox(0,0)[l]{\footnotesize$1 $}}
\put(218,163){\makebox(0,0)[l]{\footnotesize$1 $}}
\put(268,163){\makebox(0,0)[l]{\footnotesize$1 $}}
\put(318,163){\makebox(0,0)[l]{\footnotesize$1 $}}
\put(368,163){\makebox(0,0)[l]{\footnotesize$1 $}}

\put(270,43){\line(1,0){150}}
\put(270,43){\line(-1,0){150}}
\put(420,43){\line(0,1){180}}
\put(120,43){\line(0,1){180}}
\put(270,223){\line(1,0){150}}
\put(270,223){\line(-1,0){150}}
\put(320,43){\line(0,1){180}}
\put(220,43){\line(0,1){180}}
\put(270,103){\line(1,0){150}}
\put(270,103){\line(-1,0){150}}
\put(270,163){\line(-1,0){150}}
\put(270,163){\line(1,0){150}}

\put(135,0){\emph{Fig.4. The uninorm $U $ in Theorem \ref{th31}.}}
\end{picture}
\end{minipage}\\

It can be seen that $U$ in  Theorem \ref{th31}  does not necessarily  belong to $\mathcal{U}_{max}^{*}\cup \mathcal{U}_{max}^{0}$.
However, the uninorm $U$ in Theorem \ref{th31} can belong to $ \mathcal{U}_{max}^{*}$ when certain special conditions are imposed on the bounded lattice $L$, as shown in the following proposition.

\begin{proposition}\label{rema158}
Let $U $ be a uninorm in  Theorem \ref{th31}. If $]e ,1 [\subseteq\{x \}$ for some $x\in L$, or $x \parallel y $ for all $x ,y \in ]e ,1 [$, then $U \in \mathcal{U}_{max}^{*}$.
\end{proposition}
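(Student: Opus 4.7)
The plan is to mirror Proposition~\ref{re156} on the opposite side of $e$. To show $U\in \mathcal{U}_{max}^{*}$, I need to verify $U(x,y)=y$ for every $(x,y)\in [0,e[\times ]e,1]$. I would split this into two subcases according to whether $y=1$ or $y\in ]e,1[$. The case $y=1$ is immediate: for $x\in[0,e[$ the pair $(x,1)$ falls into the ``otherwise'' branch of Theorem~\ref{th31} and so $U(x,1)=1=y$. This subcase also absorbs the trivial situation $]e,1[=\emptyset$.

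For $y\in ]e,1[$ and $x\in [0,e[$, the defining formula of $U$ gives $U(x,y)=int_{1}(y)\vee (y\wedge e)=int_{1}(y)\vee e$, since $y>e$ yields $y\wedge e=e$. The key computation is therefore to show $int_{1}(y)\vee e=y$. From (IN1) we have $int_{1}(y)\leq y$, hence $int_{1}(y)\vee e\leq y\vee e=y$, so $int_{1}(y)\vee e\in [e,y]$. Moreover, the hypothesis of Theorem~\ref{th31} that $int_{1}(y)\notin [0,e]$ forbids $int_{1}(y)\leq e$, and thus $int_{1}(y)\vee e>e$; in other words $int_{1}(y)\vee e\in \,]e,y]$.

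The decisive step is to show that under either of the two assumptions in the proposition the open interval $]e,y[$ is empty, so that $]e,y]=\{y\}$ and hence $int_{1}(y)\vee e=y$. This is where I would use the hypothesis: any element of $]e,y[$ lies in $]e,1[$ and is strictly below $y$, so if $]e,1[\subseteq\{x\}$ is at most a singleton (necessarily $\{y\}$ in this case), then $]e,y[\subseteq\{y\}\setminus\{y\}=\emptyset$; and if $]e,1[$ is an antichain then no element of $]e,1[$ can be strictly below the element $y\in ]e,1[$, giving $]e,y[=\emptyset$ again. Combining this with the previous paragraph yields $U(x,y)=y$, and together with the case $y=1$ this establishes $U\in \mathcal{U}_{max}^{*}$.

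I do not expect a genuine obstacle here, since the argument is structurally dual to the proof of Proposition~\ref{re156}: one simply exchanges closure operators with interior operators, meets with joins, the interval $]0,e[$ with $]e,1[$, and reverses the relevant inequalities. The only point requiring mild care is the boundary value $y=1$, which is not covered by the formula involving $int_{1}$ but is handled directly by the ``otherwise'' clause of $U$.
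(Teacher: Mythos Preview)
Your proposal is correct and follows exactly the approach the paper intends: Proposition~\ref{rema158} is stated without proof as the dual of Proposition~\ref{re156}, and your argument is precisely the dualization of that proof (interchanging $cl_{1}$ with $int_{1}$, $\wedge$ with $\vee$, $]0,e[$ with $]e,1[$, and the bound $[x,e[$ with $]e,y]$). The handling of the boundary case $y=1$ via the ``otherwise'' clause is the correct counterpart to the paper's treatment of $x=0$ in Proposition~\ref{re156}.
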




As a dual result of Proposition \ref{co122}, we present the following result.


\begin{proposition}\label{co789}
Let $e\in L\setminus\{0 ,1 \}$,
$T$ be a t-norm on $[0 ,e ]$ and $int$ be an interior operator on $L$. Define $U :L^{2}\rightarrow L$ as follows:
\begin{eqnarray*}
U (x ,y )=\left\{
\begin{array}{ll}
T(x ,y ) & $if$\ (x ,y )\in [0 ,e ]^{2},\\
x  & $if$\ (x ,y )\in (I_{e }\cup ]e ,1 [)\times \{e \},\\
y  & $if$\ (x ,y )\in \{e \}\times (I_{e }\cup ]e ,1 [),\\
int(x )\vee (x \wedge e ) & $if$\ (x ,y )\in (I_{e }\cup ]e ,1 [)\times [0 ,e [,\\
int(y )\vee (y \wedge e ) & $if$\ (x ,y )\in [0 ,e [ \times (I_{e }\cup ]e ,1 [),\\
1  & otherwise.
\end{array} \right.
\end{eqnarray*}
Then  $U$ is a uninorm on $ L$ if and only if $int(x )\notin [0 ,e ]$ for all $x \in I_{e }\cup ]e ,1 [$.
\end{proposition}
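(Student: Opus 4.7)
The statement is the degenerate case of Theorem \ref{th31} in which the two interior operators coincide, so my plan is to specialize Theorem \ref{th31} with $int_{1}=int_{2}=int$ and verify that both the hypotheses and the piecewise formula collapse to the ones stated here. This avoids redoing the lengthy increasingness/associativity verification, which is already subsumed by Theorem \ref{th31}.

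First, I would check the hypotheses. Under $int_{1}=int_{2}=int$, the comparability condition $int_{2}(x)\leq int_{1}(x)$ for all $x\in L\setminus[0,e]$ of Theorem \ref{th31} holds trivially as an equality. Moreover, the two necessary-and-sufficient conditions ``$int_{2}(x)\notin[0,e]$ for all $x\in I_{e}$'' and ``$int_{1}(x)\notin[0,e]$ for all $x\in ]e,1[$'' merge into the single condition ``$int(x)\notin[0,e]$ for all $x\in I_{e}\cup ]e,1[$'' appearing in Proposition \ref{co789}. Thus the hypothesis here is equivalent to the hypothesis of Theorem \ref{th31} applied with $int_{1}=int_{2}=int$.

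Next, I would check that the defining formulas coincide pointwise. The pieces for $I_{e}\times[0,e[$ and $]e,1[\times[0,e[$ in Theorem \ref{th31} both reduce to $int(x)\vee(x\wedge e)$ once $int_{1}=int_{2}=int$, so they merge into the single piece $(I_{e}\cup ]e,1[)\times[0,e[$ of Proposition \ref{co789}; the transposed argument handles $[0,e[\times(I_{e}\cup ]e,1[)$. The remaining pieces ($[0,e]^{2}$, $(I_{e}\cup ]e,1[)\times\{e\}$, $\{e\}\times(I_{e}\cup ]e,1[)$, and the ``otherwise'' case valued at $1$) are already identical in the two statements. This simultaneous identification yields both directions: sufficiency follows because the operation of Proposition \ref{co789} is then literally the uninorm produced by Theorem \ref{th31}, and necessity follows because if the operation is a uninorm then Theorem \ref{th31} (necessity part) forces the stated condition on $int$.

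The main obstacle is purely bookkeeping — one must be careful that the ``otherwise'' clauses are matched after the merge, and in particular that pairs involving $0$, $1$, $I_{e}\times I_{e}$, $]e,1[\times ]e,1[$, and $(I_{e}\cup]e,1[)\times]e,1[$ with its transpose all land in the ``otherwise'' case on both sides. Alternatively, one could simply mirror the proof of Proposition \ref{co122} verbatim under lattice-duality (swapping $cl\leftrightarrow int$, $\vee\leftrightarrow\wedge$, $[e,1]\leftrightarrow[0,e]$, $0\leftrightarrow 1$, and the roles of $\mathcal{U}_{min}^{*}$ and $\mathcal{U}_{max}^{*}$); I expect no genuinely new step to arise from either route.
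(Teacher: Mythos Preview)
Your proposal is correct and matches the paper's own approach: the paper presents Proposition \ref{co789} without proof, introducing it simply as ``a dual result of Proposition \ref{co122}'' (which itself is obtained from Theorem \ref{th32} by setting $cl_{1}=cl_{2}$), so your specialization of Theorem \ref{th31} with $int_{1}=int_{2}=int$ is exactly the intended route, and your alternative dualization of Proposition \ref{co122} is the other equivalent way the paper implicitly points to.
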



\begin{remark}\label{re9}
In Proposition \ref{co789}, if $int(x )=x $ for all $x \in L$, then $int(x )\vee (x \wedge e )=x $ and $int(x )=x \notin [0 ,e ]$ for all $x \in I_{e }\cup ]e ,1 [$. In this case, the condition in Proposition \ref{co789} naturally holds. Then the uninorm $U$ constructed in Proposition \ref{co789} is consistent with  the uninorm $U_{t}: L^{2}\rightarrow L$ constructed by Kara\c{c}al and Mesiar (\cite{FK15}, Theorem 1), which is defined as follows
\begin{eqnarray*}
U_{t}(x ,y )=\left\{
\begin{array}{ll}
T (x ,y ) & $if$\ (x ,y )\in [0 ,e ]^{2},\\
x  & $if$\ (x ,y )\in (I_{e }\cup]e ,1 ])\times [0 ,e ],\\
y  & $if$\ (x ,y )\in [0 ,e ]\times (I_{e }\cup]e ,1 ]),\\
1  & otherwise.
\end{array} \right.
\end{eqnarray*}
\end{remark}


If $int_{1}(x) = x$ for all $x \in L$ in Theorem \ref{th31}, then we obtain another method to construct uninorms using an interior operator and a t-norm on a bounded lattice, which can be considered as the dual result of Remark \ref{co123}.

\begin{proposition}\label{co156}
Let $e \in L\setminus\{0 ,1 \}$,
$T $ be a t-norm on $[0 ,e ]$ and $int $ be an interior operator on $L$. Define $U :L^{2}\rightarrow L$ as follows:
\begin{eqnarray*}
U (x ,y )=\left\{
\begin{array}{ll}
T (x ,y ) & $if$\ (x ,y )\in [0 ,e ]^{2},\\
x  & $if$\ (x ,y )\in ]e ,1 ]\times  [0 ,e ]\cup I_{e }\times \{e \},\\
y  & $if$\ (x ,y )\in [0 ,e ]\times ]e ,1 ] \cup \{e \}\times I_{e },\\
int(x )\vee (x \wedge e ) & $if$\ (x ,y )\in I_{e }\times [0 ,e [,\\
int(y )\vee (y \wedge e ) & $if$\ (x ,y )\in [0 ,e [\times  I_{e },\\
1  & otherwise.
\end{array} \right.
\end{eqnarray*}
Then $U $  is a uninorm on $L$ if and only if $int(x )\notin [0 ,e ]$ for all $x \in I_{e }$.
\end{proposition}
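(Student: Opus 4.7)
The plan is to derive Proposition \ref{co156} as a direct corollary of Theorem \ref{th31} by specializing the first interior operator to the identity. Concretely, set $int_1(x) = x$ for every $x \in L$, which trivially satisfies (IN1)--(IN3), and rename $int_2$ as $int$. Under this choice, the hypothesis $int_2(x) \leq int_1(x)$ on $L \setminus [0,e]$ becomes $int(x) \leq x$, which is just (IN1) for $int$; and the condition $int_1(x) \notin [0,e]$ for $x \in\, ]e,1[$ reduces to the tautology $x \notin [0,e]$. Thus Theorem \ref{th31} reduces to the assertion that, with $int_1 = \mathrm{id}$, the formula there defines a uninorm if and only if $int(x) \notin [0,e]$ for every $x \in I_e$, which is precisely the condition appearing in Proposition \ref{co156}.

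The next step is to verify that the piecewise formula of Theorem \ref{th31} collapses, after this substitution, to the formula displayed in Proposition \ref{co156}. The branches on $[0,e]^2$, on $I_e \times [0,e[$ and on $[0,e[ \times I_e$ are unchanged. On $]e,1[ \times [0,e[$ we obtain $int_1(x)\vee(x\wedge e) = x \vee (x\wedge e) = x$, which together with the $]e,1[ \times \{e\}$ piece of the $x$-branch of Theorem \ref{th31} produces the region $]e,1[ \times [0,e]$ mapped to $x$; the remaining slice $\{1\}\times [0,e]$, which is absorbed into the otherwise clause of Theorem \ref{th31} and yields $1$, equals the value $x=1$ prescribed by Proposition \ref{co156}. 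The symmetric slice $[0,e]\times\, ]e,1]$ is handled identically. The $I_e\times\{e\}$ and $\{e\}\times I_e$ branches agree directly, and all remaining pairs, lying in $I_e\times I_e$, in $(I_e\cup\, ]e,1[\,)\times\, ]e,1[$ and in their symmetric counterparts, are mapped to $1$ by both formulas.

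Once this identification is made, sufficiency transfers verbatim from Theorem \ref{th31}. For necessity, I would run the standard associativity-defect argument dual to that used in the proof of Theorem \ref{th32}: if $int(x_0) \in [0,e]$ for some $x_0 \in I_e$, then $U(x_0, x_0) = 1$ (since $(x_0, x_0)$ lies in the otherwise branch), so $U(0, U(x_0, x_0)) = U(0, 1) = 1$; on the other hand $U(0, x_0) = int(x_0) \vee (x_0 \wedge e) \in [0, e]$, and re-applying the formula yields $U(U(0, x_0), x_0) \in \{x_0\} \cup [0, e]$, strictly below $1$, contradicting associativity. The main obstacle is purely bookkeeping: confirming that when the identity is substituted for $int_1$, the patches $]e,1[ \times [0,e[$ and $\{1\}\times [0,e]$ of Theorem \ref{th31} really do coalesce into the single $]e,1] \times [0,e]$ branch of Proposition \ref{co156} (and symmetrically), with no sub-region double-covered or missed. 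No new analytic input is required.
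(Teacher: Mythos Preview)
Your proposal is correct and follows exactly the route the paper takes: the paper states Proposition~\ref{co156} as the special case $int_1(x)=x$ of Theorem~\ref{th31}, and the formula reduction and automatic satisfaction of the $]e,1[$-condition you spell out are precisely the verifications that make this specialization go through. Your separate necessity argument via the triple $(0,x_0,x_0)$ is the dual of the paper's argument in Theorem~\ref{th32} and is not even needed once you observe (as you do) that the biconditional in Theorem~\ref{th31} already collapses to the desired one.
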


\begin{remark}\label{rem189}
Let $U $ be a uninorm constructed in  Proposition \ref{co156}. Then $U \in \mathcal{U}_{max}^{*}$ but $U\notin \mathcal{U}_{max}\cup \mathcal{U}_{min}^{r}$.
\end{remark}

In the following theorem, we still use a t-conorm on $[e ,1 ]$ and two closure operators on $L$ to  construct a uninorm on $L$.


\begin{theorem}\label{th35}
Let  $ e \in L\setminus\{0 ,1 \}$, $S $ be a t-conorm on $[e ,1 ]$, and $cl_{1}$ and $cl_{2}$ be two closure operators on $L$ with $cl_{1}(x )\leq cl_{2}(x )$ for all $x \in L\setminus[e ,1 ]$. Define $U :L^{2}\rightarrow L$  as follows:
\begin{eqnarray*}
U (x ,y )=\left\{
\begin{array}{ll}
S (x , y ) & $if$\ (x ,y )\in [e ,1 [^{2},\\
x  & $if$\ (x ,y )\in ([0 ,e [\cup I_{e })\times \{e \},\\
y  & $if$\ (x ,y )\in \{e \}\times ([0 ,e [\cup I_{e }),\\
cl_{1}(x )\wedge (x \vee e ) & $if$\ (x ,y )\in ]0 ,e [\times ]e ,1 [,\\
cl_{1}(y )\wedge (y \vee e ) & $if$\ (x ,y )\in ]e ,1 [\times ]0 ,e [,\\
cl_{2}(x )\wedge (x \vee e ) & $if$\ (x ,y )\in I_{e }\times ]e ,1 [,\\
cl_{2}(y )\wedge (y \vee e ) & $if$\ (x ,y )\in ]e ,1 [\times I_{e },\\
1  & $if$\ (x ,y )\in L\times \{1 \}\cup \{1 \}\times L,\\
0  & otherwise.
\end{array} \right.
\end{eqnarray*}

$(1)$ Suppose that $cl_{1}(x )\notin [e ,1 ]$ for all $x \in ]0 ,e [$ and $cl_{2}(x )\notin [e ,1 ]$ for all $x \in I_{e }$. Then $U $ is a uninorm on $L$ if and only if $S (x ,y )<1 $ for all $x ,y \in ]e ,1 [$.

$(2)$ Suppose that $]e ,1 [\neq \emptyset$. Then $U $ is a uninorm on $L$ if and only if $S (x ,y )<1 $ for all $x ,y \in ]e ,1 [$, $cl_{1}(x )\notin [e ,1 ]$ for all $x \in ]0 ,e [$ and $cl_{2}(x )\notin [e ,1 ]$ for all $x \in I_{e }$.
\end{theorem}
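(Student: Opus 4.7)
The plan is to prove part $(1)$ first and then deduce part $(2)$ from it together with two additional necessity arguments that exploit the hypothesis $]e,1[\neq\emptyset$. In particular, sufficiency of part $(2)$ is immediate from sufficiency in part $(1)$.

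For part $(1)$, the necessity of $S(x,y)<1$ on $]e,1[^{2}$ is quick: if some $x,y\in ]e,1[$ satisfied $S(x,y)=1$, then
\[
U(U(x,y),0)=U(1,0)=1\neq 0=U(x,0)=U(x,U(y,0)),
\]
contradicting associativity. For sufficiency, commutativity and neutrality of $e$ are immediate from the symmetric piecewise definition, and increasingness follows from the same case analysis used in Theorem \ref{th32}, augmented with the trivial subcases involving $1$ (where the output is the top $1$). Associativity is verified via Proposition \ref{pro2.1}. Cases in which some argument equals $0$, $e$, or $1$ are immediate: the first two by neutrality and absorption into the ``otherwise'' branch, the last because $U(1,\cdot)\equiv 1$ makes both sides of any associativity identity collapse to $1$. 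The cases in which all three arguments lie in $]0,e[\cup I_{e}\cup ]e,1[$ essentially repeat the verification carried out for Theorem \ref{th32}, with one genuinely new consideration: when $x,y\in ]e,1[$ and $z$ lies outside $[e,1[$, the equation $U(U(x,y),z)=U(x,U(y,z))$ requires $U(x,y)=S(x,y)$ to stay in $[e,1[$; otherwise one lands in the $\{1\}$-branch and gets $U(1,z)=1$ on the left, whereas the right collapses to $0$. The hypothesis $S(x,y)<1$ is exactly what prevents this.

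For part $(2)$, the $S$-condition is necessary by the same argument as in part $(1)$. To derive the two closure conditions, fix any $p\in ]e,1[$ (which exists by hypothesis). If $cl_{1}(x)\in[e,1]$ for some $x\in ]0,e[$, then $cl_{1}(x)\wedge(x\vee e)=cl_{1}(x)\wedge e=e$, so $U(U(p,x),x)=U(e,x)=x\neq 0$, whereas $U(p,U(x,x))=U(p,0)=0$, a contradiction. If $cl_{2}(x)\in[e,1]$ for some $x\in I_{e}$, then since $x\leq cl_{2}(x)$ forces $cl_{2}(x)\in ]e,1]$ and $x\vee e>e$, we have $cl_{2}(x)\wedge(x\vee e)\geq e$; a short subcase check (according to whether this meet equals $e$, lies in $]e,1[$, or equals $1$) shows $U(U(p,x),x)\neq 0=U(p,U(x,x))$, again contradicting associativity.

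The main obstacle I expect is the book-keeping in the sufficiency proof for part $(1)$: although the bulk of the case analysis recycles the proof of Theorem \ref{th32}, one must carefully confirm that inserting the absorbing behavior at $1$ does not spoil associativity in any mixed case, and pinpoint precisely which subcase uses the new assumption $S(x,y)<1$ on $]e,1[^{2}$. Once those cases are isolated and checked, the rest of the argument is routine.
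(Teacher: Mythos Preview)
Your proposal is correct and follows essentially the same approach as the paper: both recycle the proof of Theorem \ref{th32} for sufficiency in part (1), handling the extra cases involving $1$ trivially and noting that $S(x,y)<1$ keeps $S(x,y)\in]e,1[$ so the old case analysis applies, and both derive the closure conditions in part (2) by fixing an element of $]e,1[$ and testing associativity on a triple with a repeated argument. One small slip worth correcting: in your motivating remark for where $S(x,y)<1$ is used, the right-hand side $U(x,U(y,z))$ need not collapse to $0$ when $z\in ]0,e[\cup I_e$ (it equals $cl_1(z)\wedge e$ or $cl_2(z)\wedge(z\vee e)$, respectively), but it is still $\neq 1$, so the contradiction stands and your strategy is unaffected.
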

\begin{proof}
$(1)$ Necessity: Assume that there exist $x ,y \in ]e ,1 [$ such that $S (x ,y )=1 $. Then $U (0 ,U (x ,y ))=U (0 ,S(x ,y ))=U (0 ,1 )=1 \neq 0 =U (0 ,y )$ $=U (U (0 ,x ),y )$. This contradicts the associativity of $U $.

Sufficiency:
By the definition of $U $, it is easy to obtain that  $U $ is commutative  and  $e $  is the neutral element of $U $.  Then it remains to prove the increasingness and the associativity of $U$. Taking into account Theorem \ref{th32}, it is sufficient to check the cases that differ from those in Theorem \ref{th32}.
For the increasingness, it is obvious that $U (x ,z )\leq U (y ,z )$ if $1 \in \{x ,y ,z \}$. For the associativity, it is easy to see that $U (x , U (y ,z ))=U (U (x ,y ),z )=1$ if $1 \in \{x ,y ,z \}$. By Proposition \ref{pro2.1}, we obtain that $U (x ,U (y ,z ))=U (U (x ,y ),z )$ for all $x ,y ,$ $z \in L$. Therefore, $U $ is a uninorm on $L$ with the neutral element $e $.

$(2)$ By (1), the sufficiency is straightforward. For the necessity, assume that $U$ is a uninorm. Then we will prove the following three conclusions:

(i) $S (x ,y )<1 $ for all $x ,y \in ]e ,1 [$. It follows immediately from (1).

(ii) $cl_{1}(x )\notin [e ,1 ]$ for all $x \in ]0 ,e [$. Take any $y \in ]e ,1 [$. Assume that there exists $x \in ]0 ,e [$ such that $cl_{1}(x )\in [e ,1 ]$. Then $U (x ,U (x ,y ))=U (x ,cl_{1}(x )\wedge (x \vee e ))=U (x ,e )=x $ and $U (U (x ,x ),y )=U (0 ,y )=0 $. This contradicts the associativity of $U $, as desired.

(iii)  $cl_{2}(x )\notin [e ,1 ]$ for all $x \in I_{e }$.  By Definition \ref{de29},  we know that $cl_{2}(x )\neq e $ for all $x \in I_{e }$. In the following, we just prove that $cl_{2}(x )\notin ]e ,1 ]$ for all $x \in I_{e }$. Take any $y \in ]e ,1 [$. Assume that there exists $x \in I_{e }$ such that $cl_{2}(x )\in ]e ,1 ]$. Then $U (x ,U (x ,y ))=U (x ,cl_{2}(x )\wedge (x \vee e ))=cl_{2}(x )\wedge (x \vee e )$ and $U (U (x ,x ),y )=U (0 ,y )=0 $. This contradicts the associativity of $U $, as desired.
\end{proof}

\begin{remark}
From Theorem \ref{th35}, the structure of the uninorm $U :L^{2}\rightarrow L$ is illustrated in Fig.5..
\end{remark}

\begin{minipage}{11pc}
\setlength{\unitlength}{0.75pt}\begin{picture}(600,220)
\put(158,25){\makebox(0,0)[l]{\footnotesize$]0 ,e [$}}
\put(72,73){\makebox(0,0)[l]{\footnotesize$]0 ,e [$}}
\put(72,130){\makebox(0,0)[l]{\footnotesize$]e ,1 [$}}
\put(258,25){\makebox(0,0)[l]{\footnotesize$]e ,1 [$}}
\put(210,25){\makebox(0,0)[l]{\footnotesize$\{e \}$}}
\put(311,25){\makebox(0,0)[l]{\footnotesize$\{1 \}$}}
\put(366,25){\makebox(0,0)[l]{\footnotesize$I_{e }$}}
\put(80,103){\makebox(0,0)[l]{\footnotesize$\{e \}$}}
\put(80,43){\makebox(0,0)[l]{\footnotesize$\{0 \}$}}
\put(109,25){\makebox(0,0)[l]{\footnotesize$\{0 \}$}}
\put(80,163){\makebox(0,0)[l]{\footnotesize$\{1 \}$}}
\put(90,193){\makebox(0,0)[l]{\footnotesize$I_{e }$}}
\put(168,73){\makebox(0,0)[l]{\footnotesize$0 $}}
\put(168,103){\makebox(0,0)[l]{\footnotesize$x $}}
\put(168,130){\makebox(0,0)[l]{\footnotesize$\ast$}}
\put(168,163){\makebox(0,0)[l]{\footnotesize$1 $}}
\put(168,193){\makebox(0,0)[l]{\footnotesize$0 $}}
\put(118,73){\makebox(0,0)[l]{\footnotesize$0 $}}
\put(118,103){\makebox(0,0)[l]{\footnotesize$0 $}}
\put(118,130){\makebox(0,0)[l]{\footnotesize$0 $}}
\put(118,163){\makebox(0,0)[l]{\footnotesize$1 $}}
\put(118,193){\makebox(0,0)[l]{\footnotesize$0 $}}
\put(218,73){\makebox(0,0)[l]{\footnotesize$y $}}
\put(218,103){\makebox(0,0)[l]{\footnotesize$e $}}
\put(218,130){\makebox(0,0)[l]{\footnotesize$y $}}
\put(218,163){\makebox(0,0)[l]{\footnotesize$y $}}
\put(218,193){\makebox(0,0)[l]{\footnotesize$y $}}
\put(268,73){\makebox(0,0)[l]{\footnotesize$\star$}}
\put(268,103){\makebox(0,0)[l]{\footnotesize$x $}}
\put(253,130){\makebox(0,0)[l]{\footnotesize$S (x ,y )$}}
\put(268,163){\makebox(0,0)[l]{\footnotesize$1 $}}
\put(268,193){\makebox(0,0)[l]{\footnotesize$\bullet$}}
\put(318,73){\makebox(0,0)[l]{\footnotesize$1 $}}
\put(318,103){\makebox(0,0)[l]{\footnotesize$x $}}
\put(318,130){\makebox(0,0)[l]{\footnotesize$1 $}}
\put(318,163){\makebox(0,0)[l]{\footnotesize$1 $}}
\put(318,193){\makebox(0,0)[l]{\footnotesize$1 $}}
\put(368,73){\makebox(0,0)[l]{\footnotesize$0 $}}
\put(368,103){\makebox(0,0)[l]{\footnotesize$x $}}
\put(368,130){\makebox(0,0)[l]{\footnotesize$\circ$}}
\put(368,163){\makebox(0,0)[l]{\footnotesize$1 $}}
\put(368,193){\makebox(0,0)[l]{\footnotesize$0 $}}

\put(118,43){\makebox(0,0)[l]{\footnotesize$0 $}}
\put(168,43){\makebox(0,0)[l]{\footnotesize$0 $}}
\put(218,43){\makebox(0,0)[l]{\footnotesize$0 $}}
\put(268,43){\makebox(0,0)[l]{\footnotesize$0 $}}
\put(318,43){\makebox(0,0)[l]{\footnotesize$1 $}}
\put(368,43){\makebox(0,0)[l]{\footnotesize$0 $}}

\put(270,43){\line(1,0){150}}
\put(270,43){\line(-1,0){150}}
\put(420,43){\line(0,1){180}}
\put(120,43){\line(0,1){180}}
\put(270,223){\line(1,0){150}}
\put(270,223){\line(-1,0){150}}
\put(320,43){\line(0,1){180}}
\put(220,43){\line(0,1){180}}
\put(270,103){\line(1,0){150}}
\put(270,103){\line(-1,0){150}}
\put(270,163){\line(-1,0){150}}
\put(270,163){\line(1,0){150}}

\put(130,0){\emph{Fig.5. The uninorm $U $ in Theorem \ref{th35}.}}
\end{picture}
\end{minipage}\\

The next example illustrates the construction method of uninorms on bounded lattices in Theorem \ref{th35}.

\begin{example}\label{ex1}
Given a bounded lattice $L_{3}=\{0 ,l ,m ,n ,r ,a ,b ,c ,e ,t ,$ $1 \}$ depicted in Fig.6., a $t$-conorm $S $ on $[e ,1 ]$ defined by $S (x ,y )=x \vee y $ for all $x ,y \in [e ,1 ]$ and two closure operators $cl_{1}$ and $cl_{2}$ shown in Table \ref{Tab:10}.
It is easy to see that the closure operators $cl_{1}$ and $cl_{2}$ on $L_{3}$ satisfy the conditions in Theorem \ref{th35}, i.e.,  $cl_{1}(x )\leq cl_{2}(x )$ for all $x \in L_{3}$, $cl_{1}(x )\notin [e ,1 ]$ for all $x \in ]0 ,e [$ and $cl_{2}(x ) \notin [e ,1 ]$ for all $x \in I_{e }$.
Using the construction method in Theorem \ref{th35}, we can obtain a uninorm $U :L_{3}^{2}\rightarrow L_{3}$ with the neutral element $e$, as  shown in Table \ref{Tab:02}.

\end{example}

In Table \ref{Tab:02} of Example \ref{ex1}, we can see that $U (t ,b )=c \neq b $ for $(b ,t )\in I_{e }\times ]e ,1 [$ and $U (r ,t )=a \neq r $ for $(r ,t )\in ]0 ,e [\times ]e ,1 [$.  This demonstrates that $U \notin \mathcal{U}_{min}^{*}\cup \mathcal{U}_{min}^{1}$.
However, $U \in \mathcal{U}_{min}^{1}$ when $L$ is equipped with certain requirements, as shown in the following proposition.

\begin{proposition}
Let $U $ be a uninorm in Theorem  \ref{th35}.
If $]0 ,e [\subseteq\{x \}$ and $I_{e }\subseteq\{y \}$, or if  $x \parallel y $ for $x ,y \in ]0 ,e [$ and $x \parallel y $ for $x ,y \in I_{e }$, then $U \in  \mathcal{U}_{min}^{1}$.
\end{proposition}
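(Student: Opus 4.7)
The plan is to verify directly that the two defining conditions of $\mathcal{U}_{min}^{1}$ are satisfied by the uninorm $U$ constructed in Theorem \ref{th35}, namely $U(x,y)=y$ for every $(x,y)\in\,]e,1[\,\times(L\setminus[e,1])$, and $U(1,y)=1$ for every $y\in L\setminus[e,1]$. The second condition comes for free, since the clause $\{1\}\times L$ in the definition of $U$ already prescribes $U(1,y)=1$ on the whole top row. For the first condition, the case $y=0$ falls in the ``otherwise'' branch and yields $U(x,0)=0=y$, so the real content is to establish the two pointwise identities $cl_{1}(y)\wedge(y\vee e)=y$ for every $y\in\,]0,e[$ and $cl_{2}(y)\wedge(y\vee e)=y$ for every $y\in I_{e}$. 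Once these hold, the two corresponding branches of $U$ collapse to $y$ and the required equality $U(x,y)=y$ follows for every $x\in\,]e,1[$.

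For the first identity, fix $y\in\,]0,e[$. From (CL1) one has $y\le cl_{1}(y)$, and since $y<y\vee e=e$ this gives $y\le cl_{1}(y)\wedge e$. The standing hypothesis $cl_{1}(y)\notin[e,1]$ forces $cl_{1}(y)\wedge e<e$, so $cl_{1}(y)\wedge(y\vee e)\in[y,e[$, and the goal reduces to showing $]y,e[=\emptyset$. Under the first alternative $]0,e[\subseteq\{y\}$, any $z\in\,]y,e[$ would satisfy $0<y<z<e$, hence would lie in $]0,e[\setminus\{y\}=\emptyset$, a contradiction. Under the second alternative, any such $z$ again belongs to $]0,e[$ and satisfies $z>y$, contradicting the assumption that all elements of $]0,e[$ are pairwise incomparable. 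Thus $[y,e[=\{y\}$, which pins $cl_{1}(y)\wedge(y\vee e)$ to $y$.

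For the second identity, fix $y\in I_{e}$. I will quote observation (2) in the sufficiency direction of Theorem \ref{th32} — whose hypothesis $cl_{2}(x)\notin[e,1]$ for all $x\in I_{e}$ is exactly the one in force here — to obtain $cl_{2}(y)\wedge(y\vee e)\in I_{e}$; while (CL1) again gives $y\le cl_{2}(y)\wedge(y\vee e)$. If $I_{e}\subseteq\{y\}$, both elements lie in $\{y\}$ and hence coincide. If every two elements of $I_{e}$ are incomparable, the inequality $y\le cl_{2}(y)\wedge(y\vee e)$ between two members of $I_{e}$ forces equality, since $y<cl_{2}(y)\wedge(y\vee e)$ would violate incomparability. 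This completes the verification that $U\in\mathcal{U}_{min}^{1}$.

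The only genuinely delicate point is the interval-collapsing step, namely extracting $cl_{i}(y)\wedge(y\vee e)=y$ from the a priori bound $cl_{i}(y)\wedge(y\vee e)\in[y,e[$ (respectively, from the bound ``in $I_{e}$ and above $y$''). This is precisely where each of the two listed hypotheses is used, and the argument is entirely parallel to the one already given in Proposition \ref{re156}, so I do not expect any substantive obstacle beyond book-keeping the two alternative sets of assumptions in parallel.
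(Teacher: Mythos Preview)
Your argument is correct and follows essentially the same route the paper intends: the paper's own proof is the single sentence ``It can be immediately proved by the proof similar to Proposition \ref{re156}'', and you have simply written that out in full, adding the parallel treatment of $I_{e}$ (which is needed here but not in Proposition \ref{re156}) and correctly invoking observation (2) from the proof of Theorem \ref{th32} to place $cl_{2}(y)\wedge(y\vee e)$ in $I_{e}$.
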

\begin{proof}
It can be immediately proved by the proof similar to Proposition \ref{re156}.
\end{proof}\

\begin{minipage}{11pc}
\setlength{\unitlength}{0.75pt}\begin{picture}(600,220)
\put(266,47){$\bullet$}\put(267,40){\makebox(0,0)[l]{\footnotesize$0 $}}
\put(266,170){$\bullet$}\put(250,181){\makebox(0,0)[l]{\footnotesize$c $}}
\put(236,108){$\bullet$}\put(212,114){\makebox(0,0)[l]{\footnotesize$m $}}
\put(236,142){$\bullet$}\put(215,149){\makebox(0,0)[l]{\footnotesize$n $}}
\put(266,108){$\bullet$}\put(280,113){\makebox(0,0)[l]{\footnotesize$a $}}
\put(266,142){$\bullet$}\put(280,149){\makebox(0,0)[l]{\footnotesize$b $}}
\put(235,79){$\bullet$}\put(215,77){\makebox(0,0)[l]{\footnotesize$l $}}
\put(266,79){$\bullet$}\put(280,79){\makebox(0,0)[l]{\footnotesize$r $}}
\put(296,142){$\bullet$}\put(312,148){\makebox(0,0)[l]{\footnotesize$e $}}
\put(296,170){$\bullet$}\put(312,181){\makebox(0,0)[l]{\footnotesize$t $}}
\put(266,200){$\bullet$}\put(267,217){\makebox(0,0)[l]{\footnotesize$1 $}}

\put(270,51){\line(-1,1){30}}
\put(270,51){\line(0,1){150}}
\put(240,145){\line(1,1){30}}
\put(240,83){\line(0,1){60}}
\put(300,145){\line(0,1){30}}
\put(270,114){\line(1,1){30}}
\put(300,175){\line(-1,1){30}}

\put(160,10){\emph{Fig.6. The bounded lattice $L_{3}$.}}
\end{picture}\
\end{minipage}

\begin{table}[htbp]
\centering
\caption{The closure operators $cl_{1}$ and $cl_{2}$ on $L_{3}$.}
\label{Tab:10}\

\begin{tabular}{c|c c c c c c c c c c c c}
\hline
  $x $ & $0 $ & $r $ & $a $ & $e $ & $l $ & $m $ & $n $ & $b $ & $c $ & $t $ & $1 $ \\
\hline
  $cl_{1}(x )$ & $0 $ & $a $ & $a $ & $e $ & $n $ & $n $ & $n $ & $c $ & $c $ & $t $ & $1 $ \\
\hline
  $cl_{2}(x )$ & $a $ & $a $ & $a $ & $e $ & $c $ & $c $ & $c $ & $c $ & $c $ & $t $ & $1 $ \\
\hline
\end{tabular}
\end{table}\

\begin{table}[htbp]
\centering
\caption{The uninorm $U  $ on $L_{3}$.}
\label{Tab:02}\

\begin{tabular}{c|c c c c c c c c c c c c}
\hline
  $U  $ & $0 $ & $r $ & $a $ & $e $ & $l $ & $m $ & $n $ & $b $ & $c $ & $t $ & $1 $ \\
\hline
  $0 $ & $0 $ & $0 $ & $0 $ & $0 $ & $0 $ & $0 $ & $0 $ & $0 $ & $0 $ & $0 $ & $1 $ \\

  $r $ & $0 $ & $0 $ & $0 $ & $r $ & $0 $ & $0 $ & $0 $ & $0 $ & $0 $ & $a $ & $1 $ \\

  $a $ & $0 $ & $0 $ & $0 $ & $a $ & $0 $ & $0 $ & $0 $ & $0 $ & $0 $ & $a $ & $1 $ \\

  $e $ & $0 $ & $r $ & $a $ & $e $ & $l $ & $m $ & $n $ & $b $ & $c $ & $t $ & $1 $ \\

  $l $ & $0 $ & $0 $ & $0 $ & $l $ & $0 $ & $0 $ & $0 $ & $0 $ & $0 $ & $c $ & $1 $ \\

  $m $ & $0 $ & $0 $ & $0 $ & $m $ & $0 $ & $0 $ & $0 $ & $0 $ & $0 $ & $c $ & $1 $ \\

  $n $ & $0 $ & $0 $ & $0 $ & $n $ & $0 $ & $0 $ & $0 $ & $0 $ & $0 $ & $c $ & $1 $ \\

  $b $ & $0 $ & $0 $ & $0 $ & $b $ & $0 $ & $0 $ & $0 $ & $0 $ & $0 $ & $c $ & $1 $ \\

  $c $ & $0 $ & $0 $ & $0 $ & $c $ & $0 $ & $0 $ & $0 $ & $0 $ & $0 $ & $c $ & $1 $ \\

  $t $ & $0 $ & $a $ & $a $ & $t $ & $c $ & $c $ & $c $ & $c $ & $c $ & $t $ & $1 $ \\

  $1 $ & $1 $ & $1 $ & $1 $ & $1 $ & $1 $ & $1 $ & $1 $ & $1 $ & $1 $ & $1 $ & $1 $ \\
\hline
\end{tabular}
\end{table}




If $cl_{1} = cl_{2}$ in Theorem \ref{th35}, then we obtain a new construction method for uninorms using a closure operator and a t-conorm.
\begin{proposition}\label{rr0}
Let  $ e\in L\setminus\{0 ,1 \}$, $S $ be a t-conorm on $[e ,1 ]$ and $cl$ be a closure operator on $L$. Define $U :L^{2}\rightarrow L$  as follows:
\begin{eqnarray*}
U (x ,y )=\left\{
\begin{array}{ll}
S (x ,y ) & $if$\ (x ,y )\in [e ,1 [^{2},\\
x  & $if$\ (x ,y )\in ([0 ,e [\cup I_{e })\times \{e \},\\
y  & $if$\ (x ,y )\in \{e \}\times ([0 ,e [\cup I_{e }),\\
cl(x )\wedge (x \vee e ) & $if$\ (x ,y )\in (]0 ,e [\cup I_{e })\times ]e ,1 [,\\
cl(y )\wedge (y \vee e ) & $if$\ (x ,y )\in ]e ,1 [\times (]0 ,e [\cup I_{e }),\\
1  & $if$\ (x ,y )\in L\times \{1 \}\cup \{1 \}\times L,\\
0  &  otherwise.
\end{array} \right.
\end{eqnarray*}

$(1)$ Suppose that $cl(x )\notin [e ,1 ]$ for all $x \in ]0 ,e [\cup I_{e }$. Then $U$ is a uninorm on $L$ if and only if $S (x ,y )<1 $ for all $x ,y \in ]e ,1 [$.

$(2)$ Suppose that $]e ,1 [\neq \emptyset$. Then $U$ is a uninorm on $L$ if and only if $S(x ,y )<1 $ for all $x ,y \in ]e ,1 [$, $cl(x )\notin [e ,1 ]$ for all $x \in ]0 ,e [\cup I_{e }$.
\end{proposition}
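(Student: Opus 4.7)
The plan is to recognize that Proposition \ref{rr0} is precisely the specialization of Theorem \ref{th35} obtained by setting $cl_1 = cl_2 = cl$, so the entire argument reduces to a bookkeeping verification followed by a citation of Theorem \ref{th35}. First, I would observe that when $cl_1 = cl_2 = cl$ the monotonicity hypothesis $cl_1(x)\leq cl_2(x)$ on $L\setminus[e,1]$ in Theorem \ref{th35} is trivially satisfied. Next, I would check that the piecewise formula for $U$ in this proposition coincides with the one in Theorem \ref{th35} under this identification: on $]0,e[\times]e,1[$ the value $cl_1(x)\wedge(x\vee e)$ becomes $cl(x)\wedge(x\vee e)$, and on $I_{e}\times]e,1[$ the value $cl_2(x)\wedge(x\vee e)$ likewise becomes $cl(x)\wedge(x\vee e)$; hence these two cases amalgamate into the single case $(]0,e[\cup I_e)\times]e,1[$ stated here. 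The symmetric argument handles the transposed cases, and the remaining rows of the two tables agree verbatim.

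For part $(1)$, I would then invoke Theorem \ref{th35}$(1)$ directly: the single hypothesis that $cl(x)\notin[e,1]$ for all $x\in\,]0,e[\cup I_{e}$ implies both component conditions $cl_1(x)\notin[e,1]$ for $x\in\,]0,e[$ and $cl_2(x)\notin[e,1]$ for $x\in I_e$, so the equivalence $U$ is a uninorm $\Longleftrightarrow$ $S(x,y)<1$ for all $x,y\in\,]e,1[$ is inherited unchanged. For part $(2)$, sufficiency is immediate from $(1)$; for necessity, assuming $U$ is a uninorm and $]e,1[\neq\emptyset$, I would replay the three associativity-based contradictions from the necessity argument of Theorem \ref{th35}$(2)$: picking any $y\in\,]e,1[$, the equation $U(x,U(x,y))=U(U(x,x),y)$ forces $S(x,y)<1$ when both factors lie in $]e,1[$, forces $cl(x)\notin[e,1]$ when $x\in\,]0,e[$, and forces $cl(x)\notin[e,1]$ when $x\in I_e$ (using Definition \ref{de29} to rule out $cl(x)=e$ in the latter case). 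Combining the last two yields the unified condition $cl(x)\notin[e,1]$ for all $x\in\,]0,e[\cup I_e$.

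I do not anticipate any real obstacle here: every nontrivial step—commutativity, neutrality of $e$, monotonicity in each variable, the $[e,1[^{2}$ associativity coming from $S$, the $cl(cl(x)\wedge(x\vee e))=cl(x)$ identity supplied by Lemma \ref{le2}, and the handling of the absorbing row/column of $1$—has already been carried out in the proofs of Theorem \ref{th32} and Theorem \ref{th35}. The only content specific to this proposition is the observation that the two conditions on $cl_1$ and $cl_2$ collapse into one condition on $cl$, and the only care needed is to note that Theorem \ref{th35}$(1)$ already provides the sufficient direction with no hypothesis on $]e,1[$ being nonempty, while the additional assumption $]e,1[\neq\emptyset$ in $(2)$ is exactly what is needed to run the contradiction arguments that produce the necessity.
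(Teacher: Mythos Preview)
Your proposal is correct and matches the paper's approach exactly: the paper does not give a separate proof of Proposition~\ref{rr0} but simply introduces it as the case $cl_1=cl_2$ of Theorem~\ref{th35}, and your verification that the hypotheses and piecewise formulas collapse accordingly is precisely the bookkeeping that justifies this.
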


If $cl_{1}(x)=x $ for all $x \in L$ in Theorem \ref{th35}, then we   obtain the following result.

\begin{proposition}\label{qq1}
Let  $ e \in L\setminus\{0 ,1 \}$, $S $ be a t-conorm on $[e ,1 ]$ and $cl $ be a closure operator on $L$. Define $U :L^{2}\rightarrow L$  as follows:
\begin{eqnarray*}
U (x ,y )=\left\{
\begin{array}{ll}
S (x ,y ) & $if$\ (x ,y )\in [e ,1 [^{2},\\
x  & $if$\ (x ,y )\in ([0 ,e [\cup I_{e })\times \{e \}\cup ]0 ,e [\times ]e ,1 [,\\
y  & $if$\ (x ,y )\in \{e \}\times ([0 ,e [\cup I_{e })\cup]e ,1 [\times ]e ,1 [,\\
cl(x )\wedge (x \vee e ) & $if$\ (x ,y )\in I_{e} \times ]e ,1 [,\\
cl(y )\wedge (y \vee e ) & $if$\ (x ,y )\in ]e ,1 [\times I_{e },\\
1  & $if$\ (x ,y )\in L\times \{1 \}\cup \{1 \}\times L,\\
0  & otherwise.
\end{array} \right.
\end{eqnarray*}

$(1)$ Suppose that $cl (x )\notin [e ,1 ]$ for all $x \in I_{e }$. Then $U $ is a uninorm on $L$ if and only if $S (x ,y )<1 $ for all $x ,y \in ]e ,1 [$.

$(2)$ Suppose that $]e ,1 [\neq \emptyset$. Then $U $ is a uninorm on $L$ if and only if $S (x ,y )<1 $ for all $x ,y \in ]e ,1 [$ and $cl(x )\notin [e ,1 ]$ for all $x \in I_{e }$.
\end{proposition}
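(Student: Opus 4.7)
The plan is to derive Proposition \ref{qq1} as the direct specialization of Theorem \ref{th35} obtained by choosing $cl_1$ to be the identity operator on $L$. Observe first that the map $x \mapsto x$ trivially satisfies (CL1)--(CL3), hence qualifies as a closure operator. Setting $cl_1(x)=x$ and $cl_2 = cl$ in Theorem \ref{th35}, the comparability hypothesis $cl_1(x)\leq cl_2(x)$ on $L\setminus[e,1]$ reduces to $x\leq cl(x)$, which is precisely (CL1) for $cl$; and the hypothesis $cl_1(x)\notin[e,1]$ for $x\in\,]0,e[$ reduces to $x\notin[e,1]$ for $x\in\,]0,e[$, which is vacuous. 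Consequently the only remaining closure constraint is exactly $cl(x)\notin[e,1]$ for $x\in I_e$, matching the hypothesis imposed in (1).

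Next I would reconcile the piecewise definitions. For $x\in\,]0,e[$ we have $x<e$, so $x\vee e=e$ and $cl_1(x)\wedge(x\vee e) = x\wedge e = x$; symmetrically $cl_1(y)\wedge(y\vee e)=y$ for $y\in\,]0,e[$. Therefore, under the substitution $cl_1=\textrm{id}_L$, the two branches of Theorem \ref{th35} that previously returned $cl_1(x)\wedge(x\vee e)$ on $]0,e[\times\,]e,1[$ and $cl_1(y)\wedge(y\vee e)$ on $]e,1[\times\,]0,e[$ collapse to returning simply $x$ and $y$. Grouping these with the existing $e$-projection branches yields precisely the two lines labelled ``$x$'' and ``$y$'' in the statement of Proposition \ref{qq1} (reading the second $]e,1[$ in the ``$y$'' line as $]0,e[$, consistent with the source Theorem). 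The remaining branches---$S(x,y)$ on $[e,1[^{2}$, the $I_e\times\,]e,1[$ and $]e,1[\times I_e$ branches with $cl_2$ renamed to $cl$, the $1$-absorbing branch, and $0$ elsewhere---are transcribed unchanged.

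With hypotheses and formulae in exact correspondence, parts (1) and (2) now fall out of the corresponding parts of Theorem \ref{th35}. For sufficiency in both (1) and (2) the specialization is immediate. For necessity in (1) one recycles the argument that $S(x,y)=1$ for some $x,y\in\,]e,1[$ forces $U(0,U(x,y))=1\neq 0=U(U(0,x),y)$, violating associativity. For the extra necessity in (2) one invokes the Theorem \ref{th35}(2) argument: picking any $y\in\,]e,1[\neq\emptyset$ and any hypothetical $x\in I_e$ with $cl(x)\in\,]e,1]$, the value $z=cl(x)\wedge(x\vee e)$ lies strictly above $e$ (since $x\leq z$ and $x\parallel e$ rule out $z=e$), so $U(x,U(x,y))=U(x,z)=z\neq 0=U(U(x,x),y)$.

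The main step requiring care is the case-merging: one must check that the collapse $cl_1(x)\wedge(x\vee e)=x$ on $]0,e[$ (and its $y$-analogue) really does line up the output blocks of Theorem \ref{th35} with the grouping chosen in Proposition \ref{qq1}. Because this reduction rests only on the elementary lattice identity $x\wedge e=x$ for $x\leq e$, no genuine obstacle arises, and no independent verification of associativity, monotonicity, or the neutral element is needed---all of these are inherited from Theorem \ref{th35}.
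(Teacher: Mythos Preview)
Your proposal is correct and takes essentially the same approach as the paper, which introduces Proposition \ref{qq1} precisely as the specialization of Theorem \ref{th35} obtained by setting $cl_1(x)=x$ for all $x\in L$. You also correctly flag the apparent typo in the statement: the region $]e,1[\times]e,1[$ on the ``$y$'' line should read $]e,1[\times]0,e[$, in accordance with the source theorem.
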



Next,   we  present the dual version of Theorem \ref{th35} without proof.

\begin{theorem}\label{th36}
Let  $ e  \in L\setminus\{0 ,1 \}$, $T$ be a t-norm on $[0 ,e ]$, and $int_{1}$ and $int_{2}$ be two interior operators on $L$ with $int_{2}\leq int_{1}$ for all $x \in L\setminus[0 ,e ]$. Define $U :L^{2}\rightarrow L$  as follows:
\begin{eqnarray*}
U (x ,y )=\left\{
\begin{array}{ll}
T (x , y ) & $if$\ (x ,y )\in ]0 ,e ]^{2},\\
x  & $if$\ (x ,y )\in (I_{e }\cup ]e ,1 ])\times \{e \},\\
y  & $if$\ (x ,y )\in \{e \}\times (I_{e }\cup]e ,1 ]),\\
int_{1}(x )\vee (x \wedge e ) & $if$\ (x ,y )\in ]e ,1 [\times ]0 ,e [,\\
int_{1}(y )\vee (y \wedge e ) & $if$\ (x ,y )\in ]0 ,e [\times ]e ,1 [,\\
int_{2}(x )\vee (x \wedge e ) & $if$\ (x ,y )\in I_{e }\times ]0 ,e [,\\
int_{2}(y )\vee (y \wedge e ) & $if$\ (x ,y )\in ]0 ,e [\times I_{e },\\
0  & $if$\ (x ,y )\in L\times \{0 \}\cup \{0 \}\times L,\\
1  & otherwise.
\end{array} \right.
\end{eqnarray*}

$(1)$ Suppose that  $int_{2}(x )\notin [0 ,e ]$ for all $x \in I_{e }$ and $int_{1}(x )\notin [0 ,e ]$ for all $x \in ]e ,1 [$. Then $U $ is a uninorm on $L$ if and only if $0 <T (x ,y )$ for all $x ,y \in ]0 ,e [$.

$(2)$ Suppose that $]0 ,e [\neq \emptyset$. Then $U $ is a uninorm on $L$ if and only if $0 <T (x ,y )$ for all $x ,y \in ]0 ,e [$, $int_{2}(x )\notin [0 ,e ]$ for all $x \in I_{e }$ and $int_{1}(x )\notin [0 ,e ]$ for all $x \in ]e ,1 [$.
\end{theorem}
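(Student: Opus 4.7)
The plan is to derive Theorem \ref{th36} from Theorem \ref{th35} via order-dualization. I form the dual bounded lattice $L^{*} = (L, \leq^{*}, 1, 0)$, where $x \leq^{*} y$ iff $y \leq x$, so that $\wedge^{*} = \vee$, $\vee^{*} = \wedge$, the new bottom is $1$ and the new top is $0$, and the neutral element $e$ is preserved. Under this duality, interior operators on $L$ become closure operators on $L^{*}$ (the axioms (IN1)--(IN3) translate termwise into (CL1)--(CL3)), t-norms on $[0,e]$ become t-conorms on the interval $[e,1^{*}]^{*}$ of $L^{*}$, and the inequality $int_{2} \leq int_{1}$ on $L \setminus [0,e]$ becomes $cl_{1}^{*} \leq^{*} cl_{2}^{*}$ on $L^{*} \setminus [e, 1^{*}]^{*}$. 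Since a binary operation on $L$ is commutative, associative, and monotone with neutral element $e$ if and only if it is so on $L^{*}$, the map $U$ is a uninorm on $L$ if and only if it is a uninorm on $L^{*}$.

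Next I verify that the piecewise formula for $U$ in Theorem \ref{th36}, viewed in $L$, coincides with the formula of Theorem \ref{th35} applied in $L^{*}$. The intervals $]0,e]$, $]0,e[$, $]e,1]$, $]e,1[$, $I_{e}$, and $\{0\}$ of $L$ correspond under duality to $[e,1^{*}[^{*}$, $]e,1^{*}[^{*}$, $[0^{*},e[^{*}$, $]0^{*},e[^{*}$, $I_{e}$, and $\{1^{*}\}$ of $L^{*}$ respectively, and the expression $int_{i}(x) \vee (x \wedge e)$ in $L$ equals $cl_{i}^{*}(x) \wedge^{*} (x \vee^{*} e)$ in $L^{*}$. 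With the catch-all clause in $L$ sending the residual block to $1 = 0^{*}$ (dually to the residual clause of Theorem \ref{th35} that sends it to $0$), a row-by-row comparison shows the two defining formulas agree.

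Now I translate the three conditions. The hypothesis $int_{1}(x) \notin [0,e]$ for $x \in ]e,1[$ becomes $cl_{1}^{*}(x) \notin [e, 1^{*}]^{*}$ for $x \in ]0^{*},e[^{*}$, which is the first closure hypothesis of Theorem \ref{th35}; similarly the $int_{2}$ condition matches the hypothesis on $I_{e}$. The requirement $0 < T(x,y)$ for $x,y \in ]0,e[$ dualizes to $S^{*}(x,y) <^{*} 1^{*}$ for $x,y \in ]e, 1^{*}[^{*}$, which is precisely the t-conorm hypothesis of Theorem \ref{th35}(1). Finally, $]0,e[ \neq \emptyset$ dualizes to $]e, 1^{*}[^{*} \neq \emptyset$. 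Applying parts (1) and (2) of Theorem \ref{th35} on $L^{*}$ therefore yields parts (1) and (2) of Theorem \ref{th36}.

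The main obstacle is nothing beyond a careful bookkeeping of the duality dictionary, since interchanging $0$ and $1$ shifts which endpoint is excluded in each half-interval and reverses the direction of the defining inequalities. If one prefers a direct argument without passing to $L^{*}$, then the necessity is a mirror of Theorem \ref{th35}(1): assuming $T(x,y) = 0$ for some $x,y \in ]0,e[$ one has $U(1, U(x,y)) = U(1, 0) = 0$ while $U(U(1,x), y) = U(1, y) = 1$, contradicting associativity, and the necessity of the interior-operator conditions is obtained by the same manoeuvre that derived the $cl_{i}$ conditions in Theorem \ref{th35}. Sufficiency then follows by verifying the hypotheses of Proposition \ref{pro2.1} in parallel with the sufficiency proof of Theorem \ref{th35}, using the dual of Lemma \ref{le2} to collapse $int_{i}(int_{i}(x) \vee (x \wedge e))$ to $int_{i}(x)$ when handling the associativity in the blocks involving $]e,1[$ and $I_{e}$.
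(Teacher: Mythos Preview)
Your proposal is correct and matches the paper's own approach: the paper presents Theorem~\ref{th36} explicitly as the dual version of Theorem~\ref{th35} and gives no proof at all, so your careful order-dualization is exactly what the paper intends the reader to carry out. Your dictionary between intervals, operators, and hypotheses is accurate, and the optional direct argument you sketch at the end mirrors the necessity computations in Theorems~\ref{th32} and~\ref{th35}.
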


\begin{remark}
From Theorem \ref{th36}, the structure of the uninorm $U :L^{2}\rightarrow L$ is illustrated in Fig.7..
\end{remark}

\begin{minipage}{11pc}
\setlength{\unitlength}{0.75pt}\begin{picture}(600,220)
\put(158,25){\makebox(0,0)[l]{\footnotesize$]0 ,e [$}}
\put(72,73){\makebox(0,0)[l]{\footnotesize$]0 ,e [$}}
\put(72,130){\makebox(0,0)[l]{\footnotesize$]e ,1 [$}}
\put(258,25){\makebox(0,0)[l]{\footnotesize$]e ,1 [$}}
\put(212,25){\makebox(0,0)[l]{\footnotesize$\{e \}$}}
\put(80,43){\makebox(0,0)[l]{\footnotesize$\{0 \}$}}
\put(366,25){\makebox(0,0)[l]{\footnotesize$I_{e }$}}
\put(80,103){\makebox(0,0)[l]{\footnotesize$\{e \}$}}
\put(105,25){\makebox(0,0)[l]{\footnotesize$\{0 \}$}}
\put(90,193){\makebox(0,0)[l]{\footnotesize$I_{e }$}}
\put(168,43){\makebox(0,0)[l]{\footnotesize$0 $}}
\put(154,73){\makebox(0,0)[l]{\footnotesize$T (x ,y )$}}
\put(168,101){\makebox(0,0)[l]{\footnotesize$x $}}
\put(168,132){\makebox(0,0)[l]{\footnotesize$\ominus$}}
\put(168,193){\makebox(0,0)[l]{\footnotesize$\oslash$}}
\put(218,43){\makebox(0,0)[l]{\footnotesize$y $}}
\put(218,73){\makebox(0,0)[l]{\footnotesize$y $}}
\put(218,101){\makebox(0,0)[l]{\footnotesize$e $}}
\put(218,132){\makebox(0,0)[l]{\footnotesize$y $}}
\put(218,193){\makebox(0,0)[l]{\footnotesize$y $}}
\put(118,43){\makebox(0,0)[l]{\footnotesize$0 $}}
\put(118,73){\makebox(0,0)[l]{\footnotesize$0 $}}
\put(118,101){\makebox(0,0)[l]{\footnotesize$x $}}
\put(118,132){\makebox(0,0)[l]{\footnotesize$0 $}}
\put(118,193){\makebox(0,0)[l]{\footnotesize$0 $}}
\put(268,43){\makebox(0,0)[l]{\footnotesize$0 $}}
\put(268,73){\makebox(0,0)[l]{\footnotesize$\oplus$}}
\put(268,101){\makebox(0,0)[l]{\footnotesize$x $}}
\put(268,132){\makebox(0,0)[l]{\footnotesize$1 $}}
\put(268,193){\makebox(0,0)[l]{\footnotesize$1 $}}
\put(368,43){\makebox(0,0)[l]{\footnotesize$0 $}}
\put(368,73){\makebox(0,0)[l]{\footnotesize$\otimes$}}
\put(368,101){\makebox(0,0)[l]{\footnotesize$x $}}
\put(368,132){\makebox(0,0)[l]{\footnotesize$1 $}}
\put(368,193){\makebox(0,0)[l]{\footnotesize$1 $}}
\put(80,163){\makebox(0,0)[l]{\footnotesize$\{1 \}$}}
\put(310,25){\makebox(0,0)[l]{\footnotesize$\{1 \}$}}
\put(318,43){\makebox(0,0)[l]{\footnotesize$0 $}}
\put(318,73){\makebox(0,0)[l]{\footnotesize$1 $}}
\put(318,101){\makebox(0,0)[l]{\footnotesize$1 $}}
\put(318,132){\makebox(0,0)[l]{\footnotesize$1 $}}
\put(318,193){\makebox(0,0)[l]{\footnotesize$1 $}}

\put(118,163){\makebox(0,0)[l]{\footnotesize$0 $}}
\put(168,163){\makebox(0,0)[l]{\footnotesize$1 $}}
\put(218,163){\makebox(0,0)[l]{\footnotesize$1 $}}
\put(268,163){\makebox(0,0)[l]{\footnotesize$1 $}}
\put(318,163){\makebox(0,0)[l]{\footnotesize$1 $}}
\put(368,163){\makebox(0,0)[l]{\footnotesize$1 $}}

\put(270,43){\line(1,0){150}}
\put(270,43){\line(-1,0){150}}
\put(420,43){\line(0,1){180}}
\put(120,43){\line(0,1){180}}
\put(270,223){\line(1,0){150}}
\put(270,223){\line(-1,0){150}}
\put(320,43){\line(0,1){180}}
\put(220,43){\line(0,1){180}}
\put(270,103){\line(1,0){150}}
\put(270,103){\line(-1,0){150}}
\put(270,163){\line(-1,0){150}}
\put(270,163){\line(1,0){150}}

\put(140,0){\emph{Fig.7. The uninorm $U $ in Theorem \ref{th36}.}}
\end{picture}
\end{minipage}\\



 Similarly, the uninorm $U$ constructed in Theorem \ref{th36}  does not necessarily belong to $\mathcal{U}_{max}^{*}\cup \mathcal{U}_{max}^{0}$, in general.
However, $U \in \mathcal{U}_{max}^{0}$ when $L$ is equipped with certain requirements, as shown in the following proposition.

\begin{proposition}
Let $U $ be a uninorm in  Theorem \ref{th36}.
If $]e ,1 [\subseteq\{x \}$ and $I_{e }\subseteq\{y \}$, or if $x \parallel y $ for $x ,y \in ]e ,1 [$  and $x \parallel y $ for $x ,y \in I_{e } $, then $U \in \mathcal{U}_{max}^{0}$.
\end{proposition}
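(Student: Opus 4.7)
The plan is to dualize the proof of Proposition~\ref{re156}. By definition, $U\in\mathcal{U}_{max}^{0}$ requires both $U(0,y)=0$ for every $y\in L\setminus[0,e]$ and $U(x,y)=y$ for every $(x,y)\in ]0,e[\times(L\setminus[0,e])$. The first equality is immediate from the $\{0\}\times L$ clause in the definition of $U$ in Theorem~\ref{th36}. Decomposing $L\setminus[0,e]=]e,1[\cup\{1\}\cup I_{e}$, the case $y=1$ is also immediate, since any $(x,1)$ with $x\in ]0,e[$ lands in the ``otherwise'' clause and hence $U(x,1)=1$. So only the cases $y\in ]e,1[$ and $y\in I_{e}$ need to be checked.

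For $y\in ]e,1[$, I would use $y\wedge e=e$ and set $w:=int_1(y)\vee e$. The hypothesis $int_1(y)\notin[0,e]$ forces $w>e$, while $int_1(y)\le y$ together with $e<y$ yields $w\le y$; thus $w\in ]e,y]\subseteq ]e,1[$. Under either alternative of the proposition---$]e,1[$ a singleton, or $]e,1[$ an antichain---no element of $]e,1[$ can lie strictly below another, so $w=y$. For $y\in I_{e}$, I would set $z:=int_2(y)\vee(y\wedge e)$ and observe that $z\le y$ and $z\ge int_2(y)\notin[0,e]$, hence $z\in ]e,1]\cup I_{e}$. The case $z\in ]e,1]$ is impossible, because $y\ge z>e$ would force $y>e$, contradicting $y\in I_{e}$; therefore $z\in I_{e}$, and the singleton/antichain condition on $I_{e}$ forces $z=y$.

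The main obstacle is extracting the equalities $w=y$ and $z=y$ from the positional information provided by ``$int_i(y)\notin[0,e]$'' combined with the hypothesis on $]e,1[$ and on $I_{e}$; the singleton/antichain assumption is exactly what rules out the possibility that $w$ or $z$ sits properly between $e$ and $y$ in the lattice. Everything else is a routine dualization of the argument used in Proposition~\ref{re156}, and once these positional facts are available the proof closes immediately.
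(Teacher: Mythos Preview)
Your proposal is correct and follows exactly the route the paper intends: the paper offers no separate proof for this proposition, treating it as the straightforward dual of the $\mathcal{U}_{min}^{1}$ proposition (which in turn is proved ``similar to Proposition~\ref{re156}''), and your argument is precisely that dualization---showing $int_{1}(y)\vee e\in\,]e,y]$ for $y\in\,]e,1[$ and $int_{2}(y)\vee(y\wedge e)\in I_{e}$ with $\le y$ for $y\in I_{e}$, then using the singleton/antichain hypothesis to collapse each to $y$.
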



If  $int_{1}=int_{2}$ in Theorem \ref{th36}, then we  obtain the following proposition, which is exactly the dual result of Proposition \ref{rr0}.

\begin{proposition}\label{rr09}
Let  $ e \in L\setminus\{0 ,1 \}$, $T $ be a t-norm on $[0 ,e ]$ and $int $ be an interior operator on $L$. Define $U :L^{2}\rightarrow L$ as follows:
\begin{eqnarray*}
U(x ,y )=\left\{
\begin{array}{ll}
T (x ,y ) & $if$\ (x ,y )\in ]0 ,e ]^{2},\\
x  & $if$\ (x ,y )\in (I_{e }\cup ]e ,1 ])\times \{e \},\\
y  & $if$\ (x ,y )\in \{e \}\times (I_{e }\cup]e ,1 ]),\\
int(x )\vee (x \wedge e ) & $if$\ (x ,y )\in (I_{e }\cup]e ,1 [)\times ]0 ,e [,\\
int(y )\vee (y \wedge e ) & $if$\ (x ,y )\in ]0 ,e [\times (I_{e }\cup]e ,1 [),\\
0  & $if$\ (x ,y )\in L\times \{0 \}\cup \{0 \}\times L,\\
1  & otherwise.
\end{array} \right.
\end{eqnarray*}

$(1)$ Suppose that  $int(x )\notin [0 ,e ]$ for all $x \in I_{e }\cup ]e ,1 [$. Then $U $ is a uninorm on $L$ if and only if $0 <T (x ,y )$ for all $x ,y \in ]0 ,e [$.

$(2)$ Suppose that $]0 ,e [\neq \emptyset$. Then $U $ is a uninorm on $L$ if and only if $0 <T (x ,y )$ for all $x ,y \in ]0 ,e [$, $int(x )\notin [0 ,e ]$ for all $x \in I_{e }\cup  ]e ,1 [$.
\end{proposition}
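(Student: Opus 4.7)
The plan is to deduce Proposition \ref{rr09} as a direct specialization of Theorem \ref{th36} by taking $int_{1}=int_{2}=int$, paralleling how Proposition \ref{rr0} is obtained from Theorem \ref{th35}. Under this specialization, the comparability hypothesis $int_{2}(x)\leq int_{1}(x)$ for all $x\in L\setminus[0,e]$ of Theorem \ref{th36} is trivially fulfilled, so no extra work is required at that level.

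First, I would verify the piecewise formulas match. In Theorem \ref{th36} the four branches $int_{1}(x)\vee(x\wedge e)$ on $]e,1[\times]0,e[$, $int_{1}(y)\vee(y\wedge e)$ on $]0,e[\times]e,1[$, $int_{2}(x)\vee(x\wedge e)$ on $I_{e}\times]0,e[$, and $int_{2}(y)\vee(y\wedge e)$ on $]0,e[\times I_{e}$ collapse, when $int_{1}=int_{2}=int$, into the two branches $int(x)\vee(x\wedge e)$ on $(I_{e}\cup]e,1[)\times]0,e[$ and $int(y)\vee(y\wedge e)$ on $]0,e[\times(I_{e}\cup]e,1[)$ that appear in Proposition \ref{rr09}. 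All remaining branches (the t-norm block, the neutral-element block, the absorbing blocks) coincide literally with those in Theorem \ref{th36}, so the two definitions of $U$ agree pointwise on $L^{2}$.

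Second, I would observe the exact translation of the necessity/sufficiency conditions. The pair of assumptions appearing in Theorem \ref{th36} --- namely $int_{2}(x)\notin[0,e]$ for all $x\in I_{e}$ together with $int_{1}(x)\notin[0,e]$ for all $x\in]e,1[$ --- becomes, under $int_{1}=int_{2}=int$, the single condition $int(x)\notin[0,e]$ for all $x\in I_{e}\cup]e,1[$, which is precisely the hypothesis appearing in both parts of Proposition \ref{rr09}. The condition $0<T(x,y)$ for all $x,y\in]0,e[$ is carried over verbatim.

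Given these two translations, part (1) and part (2) of Proposition \ref{rr09} now follow immediately from parts (1) and (2) of Theorem \ref{th36}, respectively. The main (and essentially only) obstacle is the bookkeeping of the piecewise blocks and of the two-sided conditions, which is a routine verification; no new associativity or monotonicity argument is needed, since those have already been established in Theorem \ref{th36}.
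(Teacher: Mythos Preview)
Your proposal is correct and matches the paper's own approach: the paper explicitly introduces Proposition \ref{rr09} as the case $int_{1}=int_{2}$ of Theorem \ref{th36} (and as the dual of Proposition \ref{rr0}), without giving a separate proof. Your verification that the piecewise branches and the side conditions collapse correctly is exactly the routine bookkeeping the paper leaves implicit.
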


If $int_{1}(x )=x $ for all $x \in L$ in Theorem \ref{th36}, then we obtain the dual result of Proposition \ref{qq1}.

\begin{proposition}\label{qq2}
Let  $ e \in L\setminus\{0 ,1 \}$, $T $ be a t-norm on $[0 ,e ]$ and $int $ be an interior operator on $L$. Define $U :L^{2}\rightarrow L$  as follows
\begin{eqnarray*}
U(x ,y )=\left\{
\begin{array}{ll}
T (x ,y ) & $if$\ (x ,y )\in ]0 ,e ]^{2},\\
x  & $if$\ (x ,y )\in (I_{e }\cup ]e ,1 ])\times \{e \}\cup ]e ,1 [\times ]0 ,e [,\\
y  & $if$\ (x ,y )\in \{e \}\times (I_{e }\cup]e ,1 ])\cup ]0 ,e [\times ]e ,1 [,\\
int(x )\vee (x \wedge e ) & $if$\ (x ,y )\in I_{e }\times ]0 ,e [,\\
int(y )\vee (y \wedge e ) & $if$\ (x ,y )\in ]0 ,e [\times I_{e },\\
0  & $if$\ (x ,y )\in L\times \{0 \}\cup \{0 \}\times L,\\
1  & otherwise.
\end{array} \right.
\end{eqnarray*}

$(1)$ Suppose that  $int(x )\notin [0 ,e ]$ for all $x \in I_{e }$. Then $U $ is a uninorm on $L$ if and only if $0 <T (x ,y )$ for all $x ,y \in ]0 ,e [$.

$(2)$ Suppose that $]0 ,e [\neq \emptyset$. Then $U $ is a uninorm on $L$ if and only if $0 <T (x ,y )$ for all $x ,y \in ]0 ,e [$ and $int(x )\notin [0 ,e ]$ for all $x \in I_{e }$.
\end{proposition}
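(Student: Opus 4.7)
The plan is to deduce Proposition \ref{qq2} as a specialization of Theorem \ref{th36} by taking $int_{1}$ to be the identity map on $L$ and relabelling $int_{2}$ as $int$. First I would verify admissibility of this choice: the identity map trivially satisfies (IN1)--(IN3), hence is a valid interior operator; and the comparability requirement $int_{2}(x)\leq int_{1}(x)=x$ for $x\in L\setminus[0,e]$ is automatic from (IN1) applied to $int_{2}$.

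Next I would check that the piecewise formula of Theorem \ref{th36} collapses to the one in Proposition \ref{qq2}. The only cells affected by the specialization are those appealing to $int_{1}$, namely $]e,1[\times\,]0,e[$ and $]0,e[\times\,]e,1[$. For $x\in\,]e,1[$ we have $x\wedge e=e$ and $int_{1}(x)=x$, so $int_{1}(x)\vee(x\wedge e)=x\vee e=x$; by commutativity these two cells collapse to $x$ and $y$ respectively, exactly as in the table of Proposition \ref{qq2}. All remaining cells (the $T$-cell on $]0,e]^{2}$, the $\{e\}$-cells, the two $I_{e}$-cells which now use $int$, the rows and columns for $0$, and the ``otherwise'' clause giving $1$) are syntactically unchanged.

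Finally I would translate the hypotheses. The condition $int_{1}(x)\notin[0,e]$ for $x\in\,]e,1[$ in Theorem \ref{th36} becomes the tautology $x\notin[0,e]$ and may be dropped, while $int_{2}(x)\notin[0,e]$ becomes $int(x)\notin[0,e]$ for $x\in I_{e}$. What remains of the biconditional in Theorem \ref{th36}(1) is exactly Proposition \ref{qq2}(1); part (2) transfers in the same way, with the hypothesis $]0,e[\neq\emptyset$ used precisely as in Theorem \ref{th36}(2) to force the condition on $I_{e}$ via the associativity test $U(x,U(x,y))=U(U(x,x),y)$ with $y\in\,]0,e[$. I anticipate no real obstacle; the only bookkeeping needed is to confirm that no case in the piecewise definition is inadvertently altered.
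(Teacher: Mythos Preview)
Your proposal is correct and is exactly the approach the paper takes: Proposition~\ref{qq2} is introduced there as the specialization of Theorem~\ref{th36} obtained by setting $int_{1}(x)=x$ for all $x\in L$, with no separate proof given. Your verification that the $]e,1[\times\,]0,e[$ cells collapse to $x$ (resp.\ $y$) and that the condition on $int_{1}$ becomes vacuous is precisely the bookkeeping the paper leaves implicit.
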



\section{Conclusion}
\label{}


In this paper, we propose new construction methods for uninorms using two comparable closure operators (or, respectively, two comparable interior operators) on bounded lattices.
 It is worth noting that   the newly constructed uninorms need not belong to $\mathcal{U}_{max}^{*}\cup \mathcal{U}_{max}^{0}$ or $\mathcal{U}_{min}^{*}\cup \mathcal{U}_{min}^{1}$,  in general.  Moreover, we consider some special cases of the above results, some of which correspond exactly to the known construction methods.


 In this paper, we highlight the following points.
\begin{itemize}
\item  Two comparable  closure operators(or, respectively, two comparable interior operators) are used to construct uninorms on bounded lattices.

\item  Different from the uninorms  in  the literature, for  $x \in ]0 ,e [\cup I_{e }$ and $y \in]e ,1 [$ (or $x \in ]e ,1 [\cup I_{e }$ and $y \in]0 ,e [$),  we construct uninorms using closure operators (or  interior operators)  and show that the new uninorms need not  belong to  $\mathcal{U}_{min}^{*}\cup \mathcal{U}_{min}^{1}$ (or $\mathcal{U}_{max}^{*}\cup \mathcal{U}_{max}^{0}$).
\item  Additional constraints on two comparable closure operators (or two comparable interior operators)  are  both  sufficient and necessary.
\item For the construction  methods in Theorems  \ref{th32}, \ref{th31}, \ref{th35}(1) and \ref{th36}(1), there are no requirement on $L$; for the  construction methods in Theorem  \ref{th35}(2) ($]e ,1 [\neq \emptyset$) and in Theorem  \ref{th36}(2) ($]0 ,e [\neq \emptyset$), the requirements are weak.
\end{itemize}

Finally, we provide some explanations for Theorem \ref{th32}, which is the main result of this paper.
\begin{itemize}
\item
In  Theorem  \ref{th32},  it is easily seen that $U (x ,y )=0 $ for
$(x ,y )\in [0 ,e [\times [0 ,e [$.
This means that $U$ is the weakest triangular norm on the  region  $[0 ,e ]^{2}$.
 However, we have attempted to explore construction methods for uninorms by replacing $0$ with t-norms, t-conorms, or other operators.
In this case, in addition to the two closure operators, the results may require additional conditions beyond just the necessary and sufficient conditions on the closure operators.
So, we present  the result as Theorem  \ref{th32}.
In the future, we will try to    explore construction methods for uninorms, in which $U (x ,y )\neq 0 $ for $(x ,y )\in [0 ,e [\times [0 ,e [$.


\item  In  Theorem  \ref{th32},  the conditions  that   $cl_{1}(x )\notin [e ,1 ]$ for all $x \in ]0 ,e [$ and $cl_{2}(x )\notin [e ,1 ]$ for all $x \in I_{e }$ are both sufficient and necessary.
   To demonstrate the rationality of these conditions, we construct some examples to show that nontrivial closure operators on the given lattice satisfying the requirements do indeed exist (see Tables \ref{Tab:89} and \ref{Tab:10}).
\end{itemize}


In the future, we will attempt to apply our method to the setting of \cite{ZX23} and to other aggregation functions on a bounded lattice.
Moreover, as is well known, the construction methods of uninorms have been widely investigated from a theoretical perspective.
This motivates us to explore some potential applications of uninorms.

%
%

\end{document}